\newlist{Properties}{enumerate}{2} \setlist[Properties]{label=\textbf{Property} \arabic*.,itemindent=*}
\newcommand{\me}[1]{{\textcolor{magenta}{[}\textcolor{magenta}{{M.E.: \em #1}]}}}
\newtheorem{theorem}{Theorem}
\newtheorem{lemma}[theorem]{Lemma}
\newtheorem{remark}[theorem]{Remark}
\newtheorem{observation}[theorem]{Observation}
\newtheorem{conjecture}[theorem]{Conjecture}
\newtheorem{corollary}[theorem]{Corollary}
\newtheorem{proposition}[theorem]{Proposition}
\title{A note on hyperopic cops and robber}
\author{N.E. Clarke \thanks{Department of Mathematics and Statistics, Acadia University, Wolfville, NS, Canada} \and S. Finbow \thanks{Department of Mathematics and Statistics, St. Francis Xavier University, Antigonish, NS, Canada} \and M.E. Messinger \thanks{Department of Mathematics and Computer Science, Mount Allison University, Sackville, NB, Canada} \and A. Porter\thanks{Department of Mathematics and Computer Science, Mount Allison University, Sackville, NB, Canada}}
\date{\today}
\begin{document}
\maketitle

\begin{abstract}
We explore a variant of the game of Cops and Robber introduced by Bonato et al.~where the robber is invisible unless outside the common neighbourhood of the cops. The hyperopic cop number is analogous to the cop number and we investigate bounds on this quantity. We define a small common neighbourhood set and relate the minimum cardinality of this graph parameter to the hyperopic cop number. We consider diameter $2$ graphs, particularly the join of two graphs, as well as Cartesian products. 
\end{abstract}
\section{Introduction}

In the original game of Cops and Robber, both the ``cops'' and the ``robber'' have equal access to perfect information, as both the cops and the robber are visible throughout the entire game. This idealistic situation, however, does not fully encapsulate many pursuit and evasion scenarios. In recent years, there has been an increase in research that explores more practical scenarios in which the cops or robber have varied visibility (see~\cite{bonato, clarke, limitedvis,zerovis,zerovis2} for example). 

In \cite{bonato}, Bonato et al.~introduce an imperfect information variant of Cops and Robber, called ``Hyperopic Cops and Robber'', where the robber is visible \emph{unless} located on a vertex that is in the neighbourhood of every cop (i.e. if the robber is ``close'' to every cop, then the robber will be invisible).  As described in~\cite{bonato}, the motivation for this variant is a predator-prey system, where the prey has a short-range defense mechanism such as a squid releasing ink. Because these are short range defenses, when the cops are far away from the robber they will be unaffected by the prey's defense and continue to have perfect visibility.

More precisely, Hyperopic Cops and Robber is a pursuit-evasion game played on a reflexive graph. There are two players: one player who controls a finite set of cops and another player who controls a single robber. Initially, the cops choose a multiset of vertices to occupy and then the robber chooses a vertex to occupy. At each time step, a subset of cops move to an adjacent vertex, and the robber then moves to an adjacent vertex. As described above, the cops play with imperfect information: when the robber is in the neighbourhood of every cop (i.e. the vertices occupied by the cops are each adjacent to the vertex occupied by the robber), the robber is invisible.  In contrast, the robber plays with perfect visibility and can view the location of each cop at all times. Since each graph is reflexive, note that a cop or robber can \emph{pass} during a ``move'', which equates to traversing the incident loop.

If, after a finite number of moves, at least one cop occupies the same vertex as the robber, then we say the cops have \emph{captured} the robber. The cops' objective is to capture the robber, while the robber attempts to avoid this situation indefinitely. Note that by initially placing a cop on every vertex of the graph, the robber would be trivially captured. As a result, this paper focuses on finding the \emph{minimum} number of cops required to guarantee the capture of the robber, defined as the \emph{hyperopic cop number} of graph $G$ and denoted $c_H(G)$. The hyperopic cop number is the corresponding analogue to the cop number, $c(G)$, in the original game.  We note that $c(G) \leq c_H(G)$ as any strategy used by hyperopic cops to capture a robber could also be used by cops to win in the original game.  

In~\cite{bonato}, Bonato et al.~show that for a graph $G$ with diameter $3$ or greater, $c_H(G) \leq c(G)+2$, and they provide an improved bound when, in addition to the diameter requirement, $\delta(G)\leq c(G)$, namely $c_H(G) \leq c(G)+1$ (where $\delta(G)$ is the minimum degree in $G$). For graphs of diameter $2$, the relationship between these two parameters is increasingly more complex.  In~\cite{bonato}, the authors also show that the hyperopic cop number of a diameter 2 graph can be unbounded as a function of either the cop number or the order of the graph, and consider graph joins (defined below in the next paragraph), which are always of diameter at most 2. We further study Hyperopic Cops and Robber in diameter 2 graphs and the main results of this paper stem from an exploration of graph joins.   

We begin, however, by exploring the relationship between Hyperopic Cops and Robber and the original game by considering induced subgraphs and isometric paths in Section~\ref{relationship}. In Section~\ref{scn} we define a new graph parameter to provide bounds on our parameter of interest, particularly for graph joins.  Recall $G \vee J$ is the join of graphs $G$ and $J$ where $V(G \vee J)=V(G) \cup V(J)$ and $E(G \vee J) = E(G)\cup E(J) \cup \{uv~:~ u \in V(G) \text{ and } v \in V(J) \}$.  In Section~\ref{sec:graph joins}, we focus on diameter $2$ graphs and, in particular, graph joins to develop a general upper bound and exact results in some specific cases. Finally, in Section~\ref{sec:cartesian product}, we consider Cartesian products.

To avoid trivial cases, all graphs considered in this paper are finite, connected, and undirected unless otherwise indicated.

\section{Relationship to Cops and Robber}\label{relationship}
Restricting the visibility of the cops adds a layer of complexity that prohibits us from using or translating many results from the original game of Cops and Robber.

 Let $J$ be an induced subgraph of $G$ formed by deleting one vertex.  Then $J$ is a {\it retract} of $G$ if there is a homomorphism $f$ from $G$ onto $J$ so that $f(x)=x$, for $x \in V(J)$; recall that if $J$ is a retract of $G$, then $c(J) \leq c(G)$~\cite{BI}.  However, such a bound does not always hold for the game of Hyperopic Cops and Robber.  Let $G_n$ be the graph obtained by adding a leaf to one vertex of $K_n$; see Figure \ref{fig:induced subgraph} for reference.  Then $c_H(K_n) = \lceil \frac{n}{2}\rceil$~\cite{bonato}, but it is easy to see that $c_H(G_n) = 2$: one cop occupies the leaf and the second cop occupies any other vertex.  The robber will be visible unless the robber occupies the vertex adjacent to the leaf.  In either situation, the robber is captured immediately.  Thus $K_n$ is a retract of $G_n$ and $c_H(K_n) > c_H(G_n)$ for $n \geq 5$. 
 
 \begin{remark}  If graph $J$ is a retract of graph $G$, then $c(J) \leq c(G)$, but it is not necessarily true that $c_H(J) \leq c_H(G)$.\end{remark}

\begin{figure}[htbp]
    \centering
    \includegraphics[height= 4.5cm]{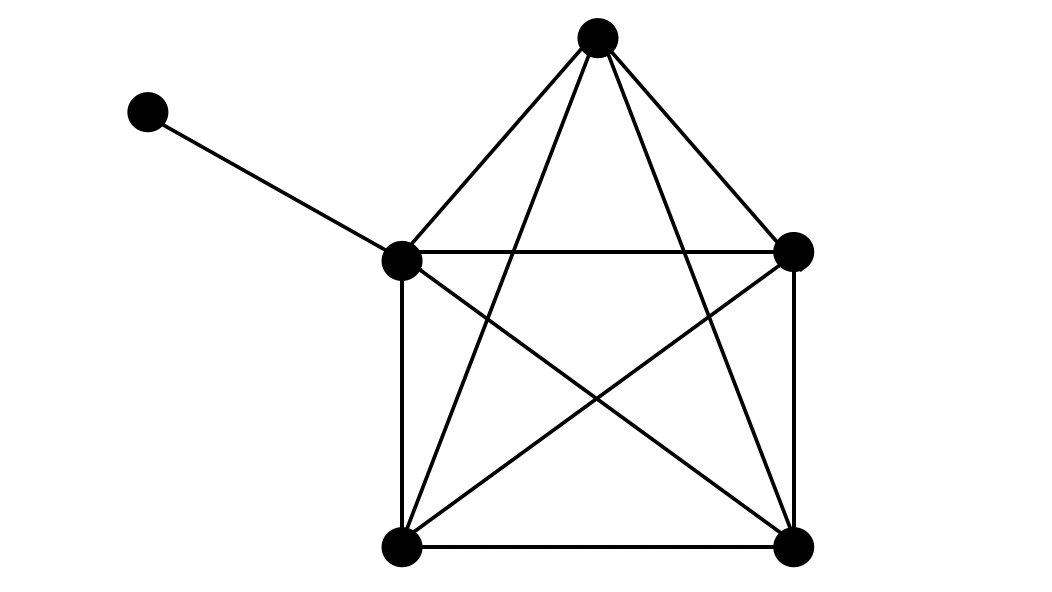}
    \caption{The graph $G_5$}
    \label{fig:induced subgraph}
\end{figure}

A path $P$ in $G$ is \emph{isometric} if, for all vertices $v$ and $w$ in $P$, $d_P(v,w)= d_G(v,w)$.  In~\cite{AF}, it is shown that in the original game of Cops and Robber, an isometric path is $1$-guardable. In other words, one cop can ensure that, after some time step, if the robber ever occupies a vertex of the isometric path, the robber will immediately be caught. This useful result has been used to prove many bounds on the cop number of a graph. 

\begin{remark} In the game of Cops and Robber, an isometric path is $1$-guardable, but in the game of Hyperopic Cops and Robber, an isometric path is not necessarily $1$-guardable. \end{remark}

Figure~\ref{fig:strongproduct} illustrates the strong product of $P_2$ and $P_3$ with an isometric path comprised of vertices $v_1$, $v_2$, $v_3$ depicted in bold. Observe that if the robber occupies (0,1) they will be invisible to the cop.  If the cop then occupies $v_2$, the robber will be invisible and can move to $v_1$ or $v_3$ unseen, and otherwise there is an unguarded path the robber can move onto.  

\begin{figure}[htbp]
    \centering
    \includegraphics[height= 4cm]{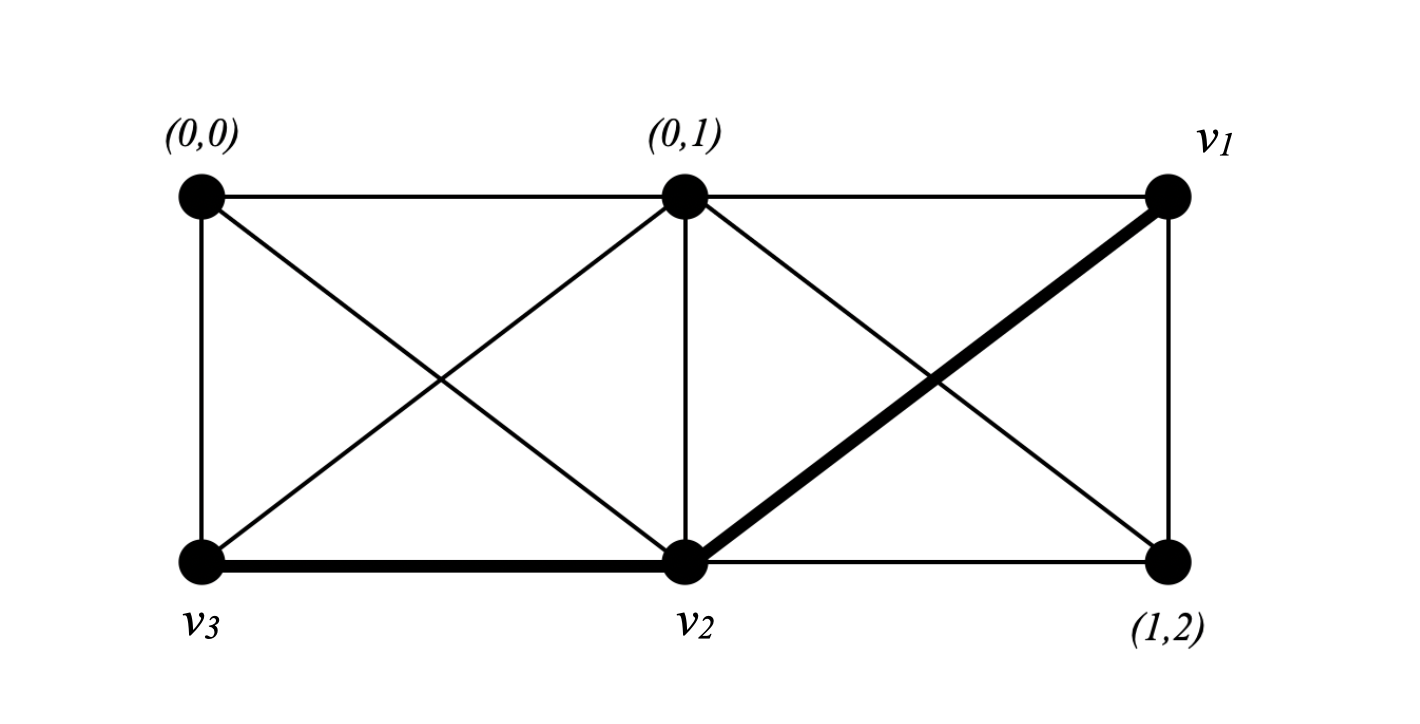}
    \caption{An isometric path in $P_2 \boxtimes P_3$ shown in bold}
    \label{fig:strongproduct}
\end{figure}

The following proof that an isometric path is $2$-guardable mirrors that of Theorem 1.7 in~\cite{bonato3}. 

\begin{theorem}
In the game of Hyperopic Cops and Robber, an isometric path is 2-guardable.
\end{theorem}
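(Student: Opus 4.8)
The plan is to adapt the shadow (projection) strategy of Aigner and Fromme to two cops, using the second cop precisely to neutralize the invisibility that breaks the one-cop guard. Write the path as $P=v_0v_1\cdots v_k$, and for a vertex $u$ of $G$ let $\pi(u)$ denote a vertex $v_i$ of $P$ at minimum distance from $u$ (its \emph{shadow}). Two facts drive everything. First, since $P$ is isometric we have $d_G(v_i,v_j)=|i-j|$; in particular $P$ is chordless, so if a robber at $r$ can reach a path vertex in one step then that vertex has index within $1$ of that of $\pi(r)$, and as the robber moves one step its shadow moves by at most one along $P$. Second, and this is the structural observation that makes two cops suffice, no path vertex is adjacent to two consecutive path vertices $v_s,v_{s+1}$: a path vertex adjacent to $v_s$ has index in $\{s-1,s+1\}$, one adjacent to $v_{s+1}$ has index in $\{s,s+2\}$, and these sets are disjoint.

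First I would describe the strategy once the cops are ``locked on.'' The two cops always occupy consecutive path vertices $v_s,v_{s+1}$ chosen so that $\pi(r)\in\{v_s,v_{s+1}\}$ (the pair brackets the robber's shadow). By the structural observation, whenever the robber stands on a path vertex it is adjacent to at most one of the two cops, hence it is \emph{visible}; and by the shadow bound any path vertex the robber can reach has index in $\{s-1,s,s+1,s+2\}$, so it lies within distance $1$ of $v_s$ or $v_{s+1}$ and a cop captures it on the reply. Thus the robber can never safely occupy a vertex of $P$, which is exactly guarding. The delicate case is when the robber is \emph{invisible}, i.e. adjacent to both $v_s$ and $v_{s+1}$: then its shadow is forced into $\{v_s,v_{s+1}\}$, so the cops retain enough information simply to pass. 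I would show that passing preserves the bracketing invariant: an invisible robber that moves either stays invisible (still adjacent to both cops, shadow still in the pair) or reappears with its shadow within one step of the pair, whereupon the cops resume tracking.

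Next I would handle the two remaining pieces. To maintain the invariant while the robber is visible, the cops mirror the Aigner--Fromme shadow: because the shadow moves by at most one per round and each cop moves one step, the consecutive pair can be shifted so as to keep bracketing $\pi(r)$, with the cops-move-first ordering absorbed in the usual way (capture on the cops' reply to any incursion). For the set-up, from arbitrary starting positions I would first bring both cops to an end of $P$, say to $v_0,v_1$, and then sweep the consecutive pair toward $v_k$ one step at a time. The robber's shadow cannot cross the advancing pair undetected: the moment the robber is adjacent to both cops it is invisible with shadow already in the pair, and otherwise it is visible and the cops sweep toward it; since $P$ is finite, a bracketing configuration is reached in finitely many moves.

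The main obstacle, and the crux of the argument, is the interaction between the cops' blindness and the cops-move-first rule: while the robber is invisible the cops must commit to a move without seeing it, and a careless move could let the shadow drift two steps from the nearer cop, opening an unguarded path vertex. This is exactly where the two structural facts are used in tandem — shadow-confinement (an invisible robber's shadow sits on the cop pair) lets the cops pass without losing the shadow, while the fact that consecutive path vertices have no common neighbour on $P$ guarantees that any genuine incursion onto $P$ is visible and hence punishable. Verifying that these two properties close every gap in the case analysis, across the set-up and maintenance phases and under the move ordering, is the part that will require care.
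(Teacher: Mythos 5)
Your overall plan---two cops on consecutive path vertices $v_s,v_{s+1}$, so that the only invisible positions are their common neighbours, which sit right on top of the pair---is an appealing idea and genuinely different from the paper's strategy. However, both of the ``two facts'' you say drive everything are false for the projection you chose (closest path vertex), and the locked-on phase fails concretely because of this. Counterexample: let the robber occupy a vertex $r$ adjacent to both $v_s$ and $v_{s-2}$ and to no other path vertex (nothing forbids this, since $d(v_s,v_{s-2})=2$). Then $\pi(r)$ may be taken to be $v_s$, which lies in the pair, so by your invariant the cops stay put; the robber is visible, yet on its turn it steps onto $v_{s-2}$, a path vertex at distance $2$ from both cops, and is not captured on the reply---guarding fails. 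This refutes your claim that ``any path vertex the robber can reach has index in $\{s-1,s,s+1,s+2\}$'': adjacency to the path only forces reachable path vertices to be within index $2$ of a closest path vertex, not $1$. The Lipschitz claim fails too: the closest-vertex shadow can jump by $3$ indices in a single robber move even for a robber adjacent to $P$ (take $r\sim v_a$ only and $r'\sim r$ with $r'\sim v_{a+3}$ only; this is consistent with isometry since $d(v_a,v_{a+3})=3=1+1+1$), and by arbitrarily many indices when the robber is far from $P$. So ``shift the pair to keep bracketing $\pi(r)$'' and ``the shadow cannot cross the advancing pair'' are both unachievable for speed-one cops.

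Both facts become true if you replace your shadow by the Aigner--Fromme image $f(r)=v_{\min(d(r,v_0),k)}$: this image moves by at most one index per robber move, and $r\sim v_t$ forces $|t-\min(d(r,v_0),k)|\le 1$. Moreover, an invisible robber (adjacent to both $v_s$ and $v_{s+1}$) must have $d(r,v_0)\in\{s,s+1\}$, so its image is automatically pinned to the pair and your ``pass while invisible'' step is then legitimate; in the counterexample above the image is $v_{s-1}$, not $v_s$, so image-tracking cops would already have shifted left and covered $v_{s-2}$. With this substitution your bracketing strategy can be completed. For comparison, the paper also works with the distance classes $D_i=\{x:d(x,v_0)=i\}$ but resolves invisibility in the opposite way: rather than tolerating and confining it, one cop is parked permanently at $v_0$ while the second tracks the image without ever descending below $v_3$; the two cops then always sit at distance at least $3$ on an isometric path, hence have no common neighbour, so the robber is visible for the entire game and the argument reduces to the classical one-cop guard, with the endpoint cop punishing entries at $v_1$ and the tracker punishing entries at $v_2,\dots,v_k$.
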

\begin{proof}  
Let $P = v_0,v_1,\,\dots,v_k$ be an isometric path in a graph $G$.  For $i \in \{0,1,\dots,k-1\}$, let $D_i = \{x \in V(G): d(x,v_0) = i\}$ and $D_k = \{x \in V(G): d(x,v_0) \geq k\}$. Since $P$ is an isometric path, it follows that $v_i \in D_i$, for $i \in \{0,1,\,\ldots, k\}$.

Two cops, restricted to $P$, play as if the robber is on $v_j$ when the robber is on some vertex of $D_j$, for $j \in \{ 0,1,\,\ldots,k\}$; i.e.~the cops play on the \emph{robber's image}. If the robber is in $D_j$, their only choice is to move to a vertex in $D_{j-1}, D_j,$ or $D_{j+1}$ and therefore their image can only be one of $v_{j-1},v_{j}$, or $v_{j+1}$.  (Note that if $j=k$, the robber can only move to a vertex in $D_{j-1}$ or $D_j$.)

If $k=1$, the cops occupy $v_0$ and $v_1$ and the robber can never occupy a vertex of $P$.  If $k=2$ then place a cop, say $C_1$, on $v_0$ and a cop, say $C_2$, on $v_1$. The robber can only enter $P$ by moving to $v_2$.  In that case, the robber will be seen immediately, as there is no edge between $v_0$ and $v_2$ so cop $C_2$ can then move to $v_2$ to capture the robber.  

If $k >2$, place cop $C_1$ on $v_0$ and cop $C_2$ on $v_3$. Note that $v_0$ and $v_3$ share no common neighbours since $P$ is isometric, and therefore the robber is initially visible. While the robber is in $D_i$ where $i \notin \{0,1,2\}$, cop $C_2$ can move to capture the robber's image. After cop $C_2$ captures the robber's image, if the robber moves to a vertex in $D_j$, $j \notin \{0,1,2\}$, cop $C_2$ moves to $v_j$.  So if the robber moves to $v_j$ for $j \in \{3,4,\dots,k\}$, cop $C_2$ will move to $v_j$ and capture the robber.

If at any time the robber is located or moves to  a vertex of $D_1 \cup D_2$, cop $C_2$ will be located at $v_3$. (Note that since cop $C_2$ follows the image of the robber if the robber is not in $D_1 \cup D_2$, we can assume that once the robber enters $D_2$, cop $C_2$ will be on $v_3$.) If the robber attempts to enter $P$ from a vertex of $D_2$ or $D_1$, then the robber will move to either $v_1$ or $v_2$. In either case, the robber will be visible to the cops and occupy a vertex adjacent to a cop; that cop will move to the vertex occupied by the robber and capture the robber.\end{proof}

\section{Small Common Neighbourhood Set}\label{scn}
We begin by defining a \emph{small common neighbourhood set} and later in the section, relate the minimum cardinally of such a set to the hyperopic cop number.  

\subsection{Definition and Properties}\label{sec:defining}

Let $G$ be a graph and note that $G$ may be disconnected. Define a non-empty set $S$ to be a \emph{small common neighbourhood set of $G$} if $$\left| \bigcap_{v \in S} N(v)\right|\leq |S|.$$ Let $\Upsilon (G)$ be the minimum cardinality of a \emph{small common neighbourhood set} of $G$. 

If we consider $S = V(G)$, then clearly such a set is a small common neighbourhood set of $G$; thus $\Upsilon(G) \leq |V(G)|$.  We additionally note that since $S$ is defined to be non-empty, we know $\Upsilon(G) > 0$ and hence, the parameter $\Upsilon$ is well-defined.

To further explain this new graph parameter, below is a pseudo algorithm that will determine $\Upsilon(G)$ for any finite graph $G$.\medskip

Let $\mathcal{P}(V(G))$ be the power-set of all vertices in $G$. 
\begin{algorithmic}[1]

    \State Set $\Upsilon(G) = 0$ and set $i = 1$. Start by labelling every set in $\mathcal{P}(V(G))$ as unchecked
    \While{$\Upsilon(G) = 0$}
        \While{ $\exists \ Z \in \mathcal{P}(V(G))$ that is unchecked and $|Z| = i$}
            \If{$|\bigcap_{v \in Z} N(v)| \leq i$}
                 \State Set $\Upsilon(G) = i$. 
                 \State Identify $Z$ as a minimum small common neighbourhood set. 
            \Else
                \State Label the set $Z$ as checked. 
        \EndIf
        \EndWhile  
    \State i = i +1
\EndWhile

\end{algorithmic} 

Clearly, the value produced by the algorithm is the minimum cardinality of a small common neighbourhood set of $G$.  

\begin{proposition}
\label{Upsilon Property}
For any graphs $G$ and $J$, $\Upsilon(G \vee J) \leq \Upsilon(G) + \Upsilon(J)$.
\end{proposition}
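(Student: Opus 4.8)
The plan is to exhibit a small common neighbourhood set of $G \vee J$ whose size is exactly $\Upsilon(G) + \Upsilon(J)$. Let $S_G \subseteq V(G)$ be a minimum small common neighbourhood set of $G$ and let $S_J \subseteq V(J)$ be a minimum small common neighbourhood set of $J$, so that $|S_G| = \Upsilon(G)$, $|S_J| = \Upsilon(J)$, $\bigl|\bigcap_{v \in S_G} N_G(v)\bigr| \leq |S_G|$, and $\bigl|\bigcap_{v \in S_J} N_J(v)\bigr| \leq |S_J|$. Because $V(G)$ and $V(J)$ are disjoint, the set $S = S_G \cup S_J$ is non-empty and satisfies $|S| = \Upsilon(G) + \Upsilon(J)$. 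I would then verify that $S$ is a small common neighbourhood set of $G \vee J$, which immediately yields $\Upsilon(G \vee J) \leq |S| = \Upsilon(G) + \Upsilon(J)$.

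The first step is to record how neighbourhoods behave in the join. Since every vertex of $G$ is adjacent to every vertex of $J$, for $v \in V(G)$ we have $N_{G \vee J}(v) = N_G(v) \cup V(J)$, and for $v \in V(J)$ we have $N_{G \vee J}(v) = N_J(v) \cup V(G)$. Writing $A = \bigcap_{v \in S_G} N_G(v) \subseteq V(G)$ and $B = \bigcap_{v \in S_J} N_J(v) \subseteq V(J)$, the crux is computing the common neighbourhood of $S$. Using the set identity $\bigcap_i (X_i \cup Y) = \bigl(\bigcap_i X_i\bigr) \cup Y$ for a fixed set $Y$, the contribution coming from $S_G$ collapses to $A \cup V(J)$ and the contribution from $S_J$ collapses to $B \cup V(G)$, so that
\[
\bigcap_{v \in S} N_{G \vee J}(v) = \bigl(A \cup V(J)\bigr) \cap \bigl(B \cup V(G)\bigr).
\]

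Expanding this intersection distributively and using $A \subseteq V(G)$, $B \subseteq V(J)$, and $V(G) \cap V(J) = \emptyset$, each cross term simplifies ($A \cap B = \emptyset$, $A \cap V(G) = A$, $V(J) \cap B = B$, $V(J) \cap V(G) = \emptyset$), leaving $\bigcap_{v \in S} N_{G \vee J}(v) = A \cup B$. As $A$ and $B$ are disjoint, $|A \cup B| = |A| + |B| \leq |S_G| + |S_J| = |S|$, so $S$ is indeed a small common neighbourhood set and the bound follows. I do not anticipate a serious obstacle; the one point demanding care is the computation in the second step, namely confirming that the ``extra'' copies of $V(J)$ and $V(G)$ forced into each neighbourhood by the join edges cancel out in the intersection rather than inflating it. This is precisely where the disjointness of $V(G)$ and $V(J)$ together with the containments $A \subseteq V(G)$ and $B \subseteq V(J)$ do the essential work.
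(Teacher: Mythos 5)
Your proposal is correct and takes essentially the same route as the paper: both use $S = S_G \cup S_J$ and rest on the identity $\bigcap_{v \in S} N_{G\vee J}(v) = \bigcap_{v \in S_G} N_G(v) \cup \bigcap_{v \in S_J} N_J(v)$. The only difference is that you verify in detail (via $N_{G\vee J}(v) = N_G(v) \cup V(J)$ and the disjointness of $V(G)$ and $V(J)$) the step the paper asserts as ``clearly,'' so your write-up is a fully justified version of the same argument.
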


\begin{proof}
Let $S_G$, $S_J$ be minimum small common neighbourhood sets of $G$ and $J$, respectively. Consider the set $S=S_G\cup S_J$.  Clearly, $$\bigcap_{v \in S} N_{G\vee J}(v)= \bigcap_{v \in S_G} N_G(v)\cup \bigcap_{v \in S_J} N_J(v).$$ It follows that $S$ is a  small common neighbourhood set of $G\vee J$. Hence $\Upsilon(G \vee J) \le \Upsilon(G) + \Upsilon(J)$.\end{proof}

Though the bound in Proposition~\ref{Upsilon Property} is obviously tight for some graphs, it is not always the case. Let $H$ be the strong product of $P_4$ and $P_2$; see Figure \ref{fig: two copies of H} for labeled vertices. Clearly $\Upsilon(H) = 2$ as the set $\{v,w\}$ forms a minimum small common neighbourhood set.  Now consider the graph $H \vee H$; here the set $\{v,w,z\}$ forms a minimum small common neighbourhood set (as indicated in Figure \ref{fig: two copies of H}), so clearly $\Upsilon(H \vee H)< \Upsilon(H)+\Upsilon(H)$.

\begin{figure}[ht]
    \centering
    \includegraphics[height = 5 cm]{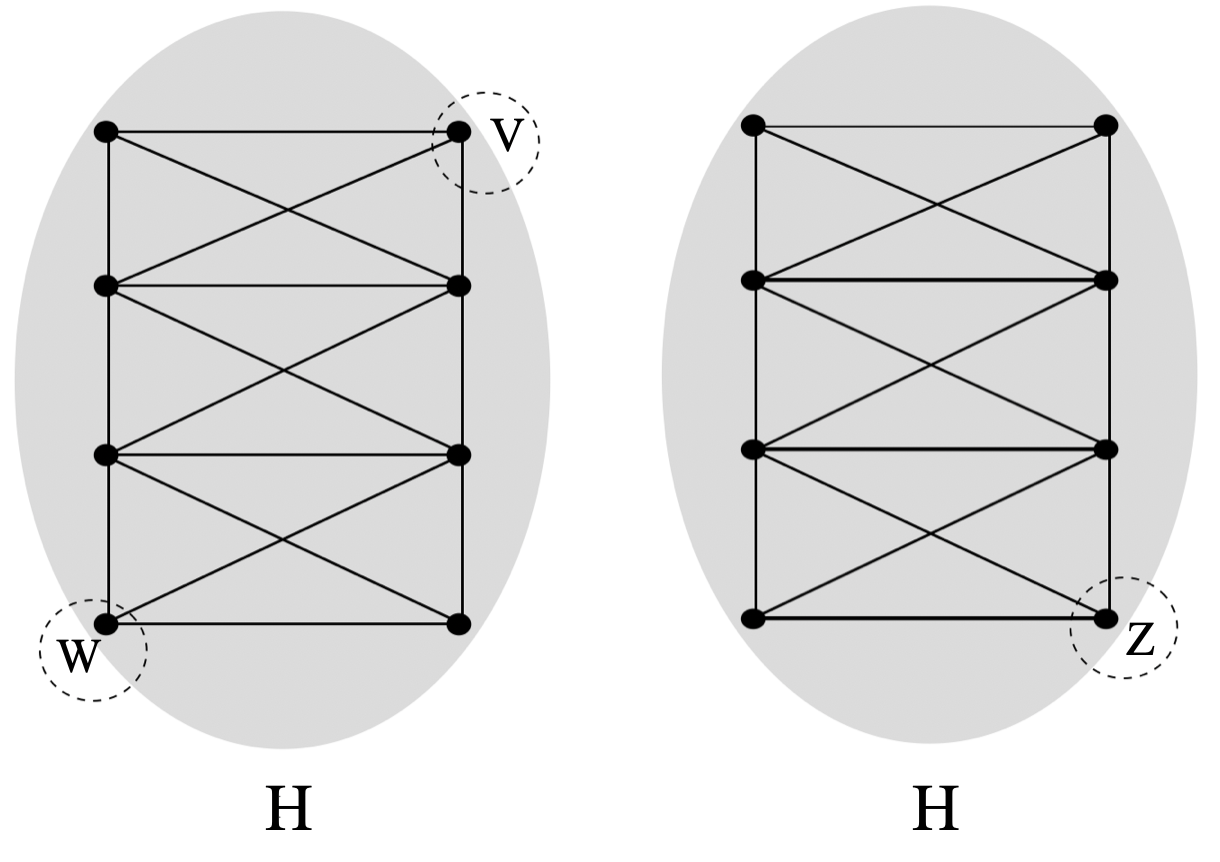}
    \caption{Two copies of graph $H$.}
    \label{fig: two copies of H}
\end{figure}

\subsection{Connections to Hyperopic Cops and Robber}\label{sec:resulting}

We next use the parameter $\Upsilon$ to provide a general upper bound for any graph. First, we state an useful observation that identifies the position of the robber if they are invisible. 

\begin{observation}
\label{upsilon invisible lemma}
Let $G$ be a graph and $S$ be a  set of $G$. If there is a cop located on every vertex in $S$ and the robber is invisible, then the robber must be located on some vertex in $\bigcap_{v \in S} N(v) $.
\end{observation}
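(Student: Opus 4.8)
The plan is to unpack the definition of invisibility supplied by the rules of Hyperopic Cops and Robber and observe that the conclusion is a direct translation into set-theoretic language. Recall from the introduction that the robber is invisible precisely when they occupy a vertex lying in the neighbourhood of every cop; equivalently, the robber's vertex is adjacent to each vertex currently occupied by a cop.

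First I would fix the robber's position, call it $r$, and invoke the hypothesis that the robber is invisible. By the definition of the game this means that $r$ is adjacent to the vertex occupied by each cop. Next, since the hypothesis guarantees that a cop sits on every vertex of $S$, each $v \in S$ is a cop-occupied vertex, so $r$ is adjacent to every such $v$. Rewriting adjacency in terms of neighbourhoods, this says $r \in N(v)$ for all $v \in S$, which is exactly the assertion that $r \in \bigcap_{v \in S} N(v)$.

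Because this argument is merely a restatement of the invisibility condition, I do not anticipate any genuine obstacle. The one point worth remarking on is that the hypothesis permits cops to occupy vertices outside $S$: in that case invisibility forces $r$ into the (possibly smaller) intersection taken over \emph{all} cop positions, and since that intersection is contained in $\bigcap_{v \in S} N(v)$, the robber's position still lies in the latter. Hence intersecting only over $S$ is harmless, and the observation holds.
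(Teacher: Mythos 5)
Your proposal is correct and matches the paper's treatment: the paper states this as an observation with no written proof precisely because it is the direct definitional unpacking you give, namely that invisibility means the robber's vertex is adjacent to every cop-occupied vertex, hence lies in $\bigcap_{v \in S} N(v)$. Your additional remark that cops outside $S$ only shrink the relevant intersection is a nice touch but does not change the argument.
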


\begin{theorem}\label{Upsilon all graph}
For any graph $G$, $c_H(G) \leq c(G) + \Upsilon(G)$. 
\end{theorem}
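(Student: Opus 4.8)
The plan is to combine a standard cop-number strategy with a small common neighbourhood set $S$ that pins down the robber's location whenever the robber is invisible. Let me think about what $c(G)$ and $\Upsilon(G)$ give us, and how to use them together.

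The key observation is Observation~\ref{upsilon invisible lemma}: if cops sit on every vertex of a set $S$ and the robber is invisible, then the robber must be on a vertex of $\bigcap_{v\in S}N(v)$. If $S$ is a small common neighbourhood set, this intersection has at most $|S|$ vertices. So I want to use $\Upsilon(G)$ cops to "pin" the invisible robber into a set of at most $\Upsilon(G)$ candidate positions, and use $c(G)$ cops to actually do the pursuit.

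Let me think about the strategy. Use $\Upsilon(G)$ cops placed on a minimum small common neighbourhood set $S$, and $c(G)$ cops playing an optimal Cops and Robber strategy. The idea: while the robber is visible, the $c(G)$ cops play their winning strategy and ignore the imperfect information entirely. The trouble is the robber becomes invisible precisely when adjacent to all cops simultaneously—which now includes the $S$-cops. The role of the $S$-cops is to guarantee that invisibility forces the robber onto one of at most $|S|$ known vertices.

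Let me sketch this more carefully.

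\section*{Proof proposal}

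The plan is to use $c(G)$ cops to run an optimal winning strategy for the ordinary game of Cops and Robber, while an additional $\Upsilon(G)$ cops are stationed on a minimum small common neighbourhood set $S$ to convert every episode of invisibility into complete information. First I would fix a minimum small common neighbourhood set $S$ with $|S|=\Upsilon(G)$ and place one cop on each vertex of $S$; these cops never move. I would place the remaining $c(G)$ cops according to an optimal strategy for the ordinary game and have them execute that strategy, treating the robber's position as known at every step. The central point is Observation~\ref{upsilon invisible lemma}: whenever the robber is invisible, every cop (in particular every cop on $S$) is adjacent to the robber, so the robber lies in $\bigcap_{v\in S}N(v)$, a set of size at most $|S|=\Upsilon(G)$.

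The key step is to argue that the $c(G)$ cops can always maintain knowledge of the robber's exact location, so that their ordinary-game strategy is legitimate. While the robber is visible this is immediate. The hard part will be to handle the transition into and out of invisibility: when the robber becomes invisible, the $S$-cops localize the robber to at most $\Upsilon(G)$ vertices, but the pursuing cops must resolve which of these vertices the robber actually occupies in order to continue simulating the perfect-information strategy. I would argue that since the robber must be adjacent to every $S$-cop and the $S$-cops remain fixed, the set of possible robber locations can only shrink or be tracked deterministically as the robber moves; in the worst case the pursuing cops treat each of the $\Upsilon(G)$ candidate images separately, but because $|\bigcap_{v\in S}N(v)|\le \Upsilon(G)$ and the stationary $S$-cops occupy $\Upsilon(G)$ vertices, the cops can afford to shadow these candidate positions.

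A cleaner framing, which I expect to be the intended argument, is the following. Run the $c(G)$ cops' perfect-information strategy under the assumption that the robber is always visible; this is valid whenever the robber is in fact visible. If the robber ever becomes invisible, the $S$-cops guarantee the robber sits on one of at most $\Upsilon(G)$ known vertices of $\bigcap_{v\in S}N(v)$. The pursuing cops, or the $S$-cops together with a parity or sweeping argument on this bounded candidate set, then force capture or force the robber back into visibility without loss of progress. Since the ordinary strategy guarantees capture in finite time whenever the cops know the robber's position, and invisibility can only occur on a set of at most $\Upsilon(G)$ vertices that is permanently covered by stationary cops, the robber cannot exploit invisibility to escape indefinitely. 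The main obstacle is making precise the interaction between the two cop teams during invisible phases; I would resolve it by noting that an invisible robber occupying a vertex of $\bigcap_{v\in S}N(v)$ is adjacent to every $S$-cop, so on its next move it either remains adjacent to all $S$-cops (staying in the same small candidate set, where a stationary $S$-cop can capture it) or moves to become visible again, at which point the perfect-information strategy resumes.
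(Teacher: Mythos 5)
Your setup coincides with the paper's --- station $\Upsilon(G)$ cops on a minimum small common neighbourhood set $S$ and let $c(G)$ cops run an ordinary winning strategy --- but you miss the single observation that finishes the proof immediately, and you manufacture difficulties in its place. By definition, every vertex of $\bigcap_{v \in S} N(v)$ is adjacent to every vertex of $S$. So the moment the robber is invisible, Observation~\ref{upsilon invisible lemma} places it on one of at most $\bigl|\bigcap_{v \in S} N(v)\bigr| \le |S| = \Upsilon(G)$ vertices, each adjacent to each of the $\Upsilon(G)$ cops sitting on $S$; those cops assign themselves one candidate vertex apiece and move there simultaneously, covering the entire intersection and capturing the robber on the very next cop move. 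Invisibility is therefore instantly fatal: there are no ``invisible phases'' to track, no transition back to visibility to handle, and the $c(G)$ pursuing cops never face any ambiguity, since they only ever act while the robber is visible, in which case the hyperopic cops have full information and simulating the perfect-information strategy needs no further justification.

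Because you miss this one-move covering capture, the machinery you propose instead does not close the argument. ``Shadowing'' the candidate positions or invoking ``a parity or sweeping argument'' is never made precise, and nothing you say rules out the robber moving within $\bigcap_{v \in S} N(v)$ or repeatedly dipping in and out of visibility while the pursuers lose track. Your final claim --- that a robber remaining in the candidate set can be captured by a \emph{stationary} $S$-cop --- is false as stated: a stationary cop adjacent to the robber does not capture it, and a single moving cop cannot know which of the up-to-$\Upsilon(G)$ candidates to move onto. The correct resolution is precisely the simultaneous covering step above, which is also the only place the defining inequality $\bigl|\bigcap_{v \in S} N(v)\bigr| \le |S|$ is actually used.
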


\begin{proof} Let $G$ be a graph and $S$ be a small common neighbourhood set of minimum cardinality. Place a cop on each vertex of $S$ and place the remaining $c(G)$ cops arbitrarily on vertices of $G$.

If the robber is invisible, then the robber must be in $\cap_{v \in S} N(v)$, as per Observation \ref{upsilon invisible lemma}. The $\Upsilon(G)$ cops on set $S$ can immediately move to $\cap_{v \in S} N(v)$ to capture the robber since $|\cap_{v \in S} N(v)| \leq \Upsilon(G)$ by definition. 

If the robber is visible, then the $\Upsilon(G)$ cops located on set $S$ remain on the vertices of $S$ (to ensure the robber remains visible) while the remaining $c(G)$ cops follow the winning strategy from the original game of Cops and Robber in order to capture the robber. \end{proof}

The following results all appear in~\cite{bonato}; however, these results can all be proven in a simple and direct way by utilizing the concept of a  small common neighbourhood. Consequently, we include the proofs below as an illustration of the usefulness of the parameter.

\begin{theorem}\label{thm:bonato}
Let $G$ be a graph.

(1) If $diam(G) \geq 3$ then $c_H(G) \leq c(G)+2$. 

(2) If $G$ contains a cut-vertex, then $c_H(G) \leq c(G)+1$. 

(3) If $G$ is triangle free, then $c_H(G) \leq c(G)+1$.
\end{theorem}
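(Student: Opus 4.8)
The plan is to prove each of the three parts by exhibiting a small common neighbourhood set of the appropriate size and then invoking Theorem~\ref{Upsilon all graph}. Since that theorem gives $c_H(G) \le c(G) + \Upsilon(G)$, it suffices to show $\Upsilon(G) \le 2$ for part (1) and $\Upsilon(G) \le 1$ for parts (2) and (3). In other words, the whole argument reduces to finding, in each case, a small set $S$ of vertices whose common neighbourhood is no larger than $|S|$.

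For part (1), since $diam(G) \ge 3$, there exist two vertices $u,w$ with $d(u,w) \ge 3$. Then no vertex can be adjacent to both $u$ and $w$ (such a vertex would give a path of length $2$), so $N(u) \cap N(w) = \emptyset$. Taking $S = \{u,w\}$ gives $|\bigcap_{v \in S} N(v)| = 0 \le 2 = |S|$, hence $\Upsilon(G) \le 2$, and the bound follows.

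For part (2), let $x$ be a cut-vertex and let $y,z$ lie in distinct components of $G - x$. Any common neighbour of $y$ and $z$ other than $x$ would connect the two components, so $N(y) \cap N(z) \subseteq \{x\}$, giving a common neighbourhood of size at most $1$. However, this uses a two-element set and only shows $\Upsilon(G) \le 2$, not $\Upsilon(G) \le 1$. To get $\Upsilon(G) \le 1$ I would instead look for a single vertex $s$ with $|N(s)| \le 1$: in a graph with a cut-vertex one can locate a leaf or a degree-one vertex in a leaf block, or more carefully argue that some vertex has at most one neighbour. The cleaner route is probably to use Observation~\ref{upsilon invisible lemma} directly with a one-cop strategy exploiting the cut-vertex, rather than forcing $\Upsilon(G) \le 1$; I would place one cop at the cut-vertex to separate the graph and confine the invisible robber. \emph{This is the step I expect to be the main obstacle}, since the naive common-neighbourhood set has size two.

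For part (3), if $G$ is triangle-free then for any edge $uv$ the vertices $u$ and $v$ share no common neighbour (a shared neighbour would form a triangle). More directly, take any single vertex $s$; being triangle-free means $N(s)$ is an independent set, but that alone does not bound $|N(s)|$. Instead, take $S = \{s\}$ for a vertex $s$ of minimum degree, or take an edge $uv$ so that $N(u) \cap N(v) = \emptyset \le 2$. The edge version again only gives $\Upsilon(G) \le 2$, so to reach $c(G)+1$ I would take a single vertex $s$ together with the triangle-free condition applied to its neighbourhood: in a triangle-free graph, if a cop sits on a vertex $s$ then the invisible robber lies in $N(s)$, which is an independent set, and one additional cop can then patrol $N(s)$ knowing the robber cannot hide among mutually adjacent vertices. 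The key idea common to all three parts is that restricting visibility forces the invisible robber into a common neighbourhood, and a well-chosen small set collapses that hiding region to a size the extra cops can cover.
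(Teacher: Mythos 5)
Your part (1) is exactly the paper's argument --- two vertices at distance at least $3$ have empty common neighbourhood, so $\Upsilon(G) \leq 2$ and Theorem~\ref{Upsilon all graph} finishes --- and it is correct. Parts (2) and (3), however, contain genuine gaps that you yourself partially flag but never close. The reduction to $\Upsilon(G) \leq 1$ cannot be repaired: $\Upsilon(G)=1$ forces a vertex of degree at most $1$, and a graph with a cut-vertex (two triangles sharing a vertex) or a triangle-free graph ($C_4$) need not have one, so for both parts the $\Upsilon$-machinery caps out at $c(G)+2$ and a bespoke strategy is required. Worse, your fallback for (2) --- ``place one cop at the cut-vertex'' --- does not confine the invisible robber: a cop stationed on the cut-vertex $u$ is adjacent to the robber whenever the robber occupies any neighbour of $u$, so that cop contributes nothing toward keeping the robber visible. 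The paper's idea, which is the one you are missing, is to place cops on two \emph{non-adjacent neighbours} $v,w$ of $u$ chosen in different components of $G-u$; then $N(v) \cap N(w) \subseteq \{u\}$, so an invisible robber can only be standing on $u$, where the stationary cop at $v$ captures it in one move, while the remaining $c(G)$ cops (one starting at $w$, $c(G)-1$ at $u$) execute the classical winning strategy on the side of the graph containing the robber. This yields $c(G)+1$ with a concrete, checkable strategy.

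For (3) your sketch also stalls at the decisive step. Asserting that the invisible robber lies in the independent set $N(s)$ and that ``one additional cop can then patrol $N(s)$'' is not a strategy: $N(s)$ can be arbitrarily large, independence does not let a single cop cover it, and you never specify how this pursuit interacts with the $c(G)$ cops running the classical strategy (the robber may leave $N(s)$, become visible, and return at will). The paper's route is structurally different: it keeps one cop $C_1$ permanently fixed on a vertex $x$ and argues that, with $C_1$ pinned there, triangle-freeness prevents the robber from ever being invisible (adjacent vertices have empty common neighbourhood), so the other $c(G)$ cops simply execute the perfect-information winning strategy unimpeded. That is a visibility-maintenance argument, whereas yours is a capture-in-the-hiding-set argument, and only the former delivers the $c(G)+1$ bound here. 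As written, your (1) stands, but (2) and (3) are incomplete precisely at the steps you identified as obstacles.
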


\begin{proof} 

For {\it(1)}, let $G$ be a graph with $diam(G) \geq 3$. Then there exist vertices $v, w \in V(G)$ such that $d(v,w) \geq 3$. As $|N(v) \cap N(w)| = 0$, the set $\{v,w\}$ forms a small common neighbourhood set and therefore $\Upsilon(G) \leq 2$. By Theorem \ref{Upsilon all graph}, $c_H(G) \leq c(G) +2$.\medskip

For {\it(2)}, Let $G$ be a graph with a cut-vertex $u$ with non-adjacent neighbours $v$, $w$.  Further, let $S_v$ and $S_w$ be sets such that $S_v \cup \{u\} \cup S_w = V(G)$ and $v,w$ are in different components of the induced subgraph $G[S_v \cup S_w]$. Initially, place one cop on $v$, one cop on $w$, and $c(G)-1$ cops on $u$.  Since $\{u,v,w\}$ is a small common neighbourhood set ($N(u) \cap N(v) \cap N(w) = \emptyset$), the robber is visible. Without loss of generality, suppose the robber is on a vertex in $S_w$. The cop located at $v$ remains at vertex $v$ while the robber is visible (and in $S_w$), while the remaining $c(G)$ cops move on the vertices of the subgraph $G[S_w \cup \{u\}]$ to follow a winning strategy from the original game of Cops and Robber and capture the robber. If, at any point the robber becomes invisible, it must be because the robber has moved to $u$.  In this case, the cop at $v$ moves to $u$ to capture the robber.\medskip  

For {\it (3)}, let $G$ be a triangle-free graph with no cut-vertex and consider the power set $\mathcal{P}(V(G))$. Since $G$ is triangle-free, for all sets $X \in \mathcal{P}(V(G))$ where $|X| = 2$, $X$ satisfies the conditions for a small common neighbourhood set on $G$. This is because for any adjacent vertices $v,w \in X$, $|N_G(v) \cap N_G(w)| = 0 \leq |X| $. We place $c(G)+1$ cops arbitrarily on vertices of $G$.  Suppose cop $C_1$ is located on vertex $x$.   If $C_1$ remains on $x$, then no matter the locations of the other $c(G)$ cops, the robber cannot be in the common neighbourhood of both $x$ and all the vertices occupied by the other $c(G)$ cops. Therefore we can conclude that the robber will be visible throughout the game; $C_1$ remains at $x$ and the other $c(G)$ cops will follow a winning strategy for the original game of Cops and Robber.\end{proof}

\begin{corollary}
    \label{diam 2, universal vertex with degree 3 vertex}
Let $G$ be a graph with a universal vertex.  If  there exists $ v \in V(G)$ such that  $deg(v) \leq 3$, then $c_H(G) \leq 2$.\end{corollary}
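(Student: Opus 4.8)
The plan is to give an explicit two-cop strategy, using the universal vertex both to capture a visible robber and, together with the low-degree vertex, to confine an invisible robber to a set small enough for two cops to clear in a single move. Let $u$ be a universal vertex of $G$. First I would dispose of the degenerate case $v=u$: if the universal vertex itself has degree at most $3$, then $|V(G)|\le 4$, and any vertex distinct from $u$ has degree at most $|V(G)|-1\le 3$, so I may replace $v$ by such a vertex and assume throughout that $v\neq u$. In particular $u\in N(v)$, since $u$ is universal.

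Next I would place cop $C_2$ on $u$ and cop $C_1$ on $v$ and keep them there until the robber's status is determined. The key computation is the common neighbourhood of the two cop vertices: because $N(u)=V(G)\setminus\{u\}$ and $N(v)=\{u\}\cup N'$ with $N':=N(v)\setminus\{u\}$, we get $N(u)\cap N(v)=N'$, a set of size $|N'|=deg(v)-1\le 2$. Thus $\{u,v\}$ is a small common neighbourhood set, and by Observation~\ref{upsilon invisible lemma} an invisible robber must occupy one of the at most two vertices of $N'$, each of which is adjacent to both $u$ and $v$.

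With this in hand the two branches of the strategy are immediate. If the robber is visible on some vertex $w$, then since $u$ is universal we have $w\in N(u)$, so $C_2$ moves from $u$ to $w$ and captures the robber before it can move. If the robber is invisible, then it lies in $N'$, which is either $\{a\}$ or $\{a,b\}$; since $a,b\in N(u)\cap N(v)$, I move $C_1$ from $v$ and $C_2$ from $u$ so that the two cops together occupy every vertex of $N'$, again capturing the robber on the cops' turn. As exactly one of these two scenarios holds at the cops' first opportunity after the robber is placed, the robber is captured in a single move, giving $c_H(G)\le 2$.

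The main obstacle is not the strategy itself but seeing why the obvious route fails: applying Theorem~\ref{Upsilon all graph} directly with $c(G)=1$ (which holds since $u$ is universal) and $\Upsilon(G)\le 2$ (witnessed by $\{u,v\}$ above) yields only $c_H(G)\le 3$. The improvement to $2$ comes from recognizing that the cop stationed at the universal vertex plays a double role: it is simultaneously one of the two cops pinning down the invisibility region $N'$ and the single cop that a universal vertex already supplies for the underlying Cops and Robber win. Counting it once rather than twice, that is, making the reuse explicit instead of summing the two separate bounds, is the crux of the argument.
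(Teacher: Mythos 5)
Your proof is correct and takes essentially the same approach as the paper's: place the two cops on the universal vertex $u$ and the low-degree vertex $v$, capture a visible robber via $u$, and sweep the common neighbourhood $N(u)\cap N(v)$ (of size at most $2$) when the robber is invisible. Your treatment is in fact slightly more careful than the paper's, since you handle the degenerate case $v=u$ and compute $|N(u)\cap N(v)|=\deg(v)-1\le 2$ precisely, where the paper just asserts $|N(v)\cap N(u)|=2$.
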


\begin{proof}Let $u$ be the universal vertex of $G$ and $v$ be a vertex with $deg(v) \leq 3$. Note that the set $\{v,u\}$ forms a small common neighbourhood set because $|N(v) \cap N(u)| = 2 \leq \{v,u\}$. Therefore, let $S = \{v,u\}$.  Start by placing one cop on $u$ and the other cop on $v$.  If the robber is visible, then the cop on $u$ can easily move to the robber's position to capture. If the robber is invisible, then they must be in $N(v) \cap N(u)$, as per Observation~\ref{upsilon invisible lemma}.  Since $S$ is a small common neighbourhood set, then the cops on $u$ and $v$ can move to capture the robber in $N(v) \cap N(u)$.\end{proof}

\section{Diameter 2: Graph Joins}\label{sec:graph joins}

In \cite{bonato}, the authors determine the hyperopic cop number for diameter $1$ graphs  (i.e.~complete graphs). Furthermore, they  bound the hyperopic cop number in terms of the original cop number for graphs of diameter $3$ or greater. Little is known, however, about the hyperopic cop number of diameter $2$ graphs, which motivates the work in this section. These dense graphs provide many opportunities for the robber to  be invisible initially and throughout the game. From $\cite{bonato}$, we can see that when a graph has diameter $1$, the hyperopic cop number ``blows up'': $c_H(K_n)= \lceil \frac{n}{2}\rceil$. It was additionally shown in \cite{bonato} that some graphs of diameter $2$ also have large hyperopic cop numbers. Consider the graph $K_n - e$, which is a complete graph with one edge removed. From \cite{bonato}, $diam (K_n - e) = 2$ and $c_H (K_n - e) = \lfloor \frac{n}{2} \rfloor$.  In this section, we examine the hyperopic cop number for a large family of diameter $2$ graphs: graph joins. 

 Note that the join $G \vee J$ is connected, regardless of whether $G$ or $J$ are connected themselves. Furthermore, the join of two graphs always has $diam(G \vee J) \leq 2$. As a result, $G \vee J$ will have diameter $2$, unless both $G$ and $J$ are complete graphs, in which case $diam(G \vee J) = 1$.

Furthermore, we note that for two graphs $G$ and $J$, $\gamma(G \vee J) \leq 2$, where $\gamma$ denotes the domination number. From this, it is clear that in the original game of Cops and Robber, the cop number of a graph join $G \vee J$ is always at most $2$: place one cop on a vertex of $G$ and the other cop on a vertex of $J$. If we place a hyperopic cop on a vertex $u \in V(G)$ and a vertex $v \in V(J)$, then $|N_{G \vee J}(u) \cap N_{G\vee J}(v)| = \deg_G(u)+\deg_J(v)$ and a robber located on one of the $\deg_G(u)+\deg_J(v)$ vertices in the common neighbourhood would be invisible and therefore in most cases would not immediately be captured.

Consider two complete graphs $K_m$ and $K_n$. From the definition of graph joins, we know that $K_m \vee K_n = K_{m+n}$ and $c_H(K_{m+n}) = \lceil \frac{m+n}{2} \rceil$ and $c(K_{m+n})=1$.  This example illustrates how $c_H(G \vee J)$ may be very different from $c(G \vee J)$ and that the hyperopic cop number of the join of two graphs acts independently from the hyperopic cop number of its two factors.  We begin with a useful observation.

\begin{observation}
\label{lemmajoin}
Let $G$ and $J$ be graphs and consider the game of Hyperopic Cops and Robber played on $G \vee J$. If the robber becomes visible during any round and there is at least one cop in each of $G$ and $J$, then the robber will be captured during the cops' next move. 
\end{observation}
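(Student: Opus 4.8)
The plan is to exploit the single defining feature of the join, namely that in $G \vee J$ every vertex of $G$ is adjacent to every vertex of $J$. Consequently a cop stationed anywhere in $G$ is adjacent to all of $V(J)$, and a cop stationed anywhere in $J$ is adjacent to all of $V(G)$. Since we are told the robber is visible, the cops know exactly which vertex $r$ the robber occupies, and the whole argument reduces to a short case analysis according to whether $r$ lies in $V(G)$ or in $V(J)$.

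First I would fix notation: let $c_G$ denote a cop occupying a vertex of $V(G)$ and $c_J$ a cop occupying a vertex of $V(J)$; such cops exist by hypothesis. Let $r$ be the (known) vertex occupied by the robber. If $r$ already coincides with the vertex of some cop, then the robber is already captured and there is nothing to prove, so assume otherwise.

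Then comes the case analysis. If $r \in V(G)$, then by the join edges $r$ is adjacent to every vertex of $V(J)$, in particular to the vertex occupied by $c_J$; hence on the cops' move $c_J$ steps directly onto $r$ and captures the robber. The case $r \in V(J)$ is entirely symmetric, with $c_G$ effecting the capture. In either case the capture occurs on the cops' very next move, as claimed.

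The proof has essentially no difficult step; the only points requiring care are bookkeeping rather than mathematics. I expect the main thing to get right is the role of the visibility hypothesis: it is precisely the robber being visible that guarantees the cops know $r$ and can therefore aim the capturing move, since if the robber were invisible the capturing cop would not know which neighbour to move to. I would also make explicit the move order (the cops move, capture is checked, and only then would the robber move) so that it is clear the robber has no opportunity to flee before the capturing cop arrives on its vertex.
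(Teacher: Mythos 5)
Your proof is correct and follows essentially the same approach as the paper: since every vertex of $G$ is adjacent to every vertex of $J$ in the join, the cop on the side opposite the (visible, hence located) robber simply moves onto the robber's vertex. The paper's version adds a brief aside on $\gamma(G \vee J)$ but its core argument is exactly your case analysis.
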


Note that $\gamma (G \vee J) \leq 2$ and $diam(G \vee J) \leq 2$. If $\gamma(G \vee J) =1$ then both $G \cong K_n$ and $J \cong K_m$ for some $m,n \geq 1$. Therefore, the robber will never be visible. Assume then $\gamma(G \vee J) = 2$ and, without loss of generality, the robber is in $G$. If the robber becomes visible and there is a cop in $J$, then the cop can move to $G$ to capture the robber.

\begin{theorem}
\label{bothupsilon}
For any graphs $G$ and $J$, $c_H(G \vee J) \leq \Upsilon(G) + \Upsilon(J)$. 
\end{theorem}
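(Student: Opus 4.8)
The plan is to reuse the small common neighbourhood set constructed in Proposition~\ref{Upsilon Property} as the set of starting positions for the cops, and then to argue that the auxiliary cops needed in Theorem~\ref{Upsilon all graph} are unnecessary here, because the join structure lets a single cop in the opposite factor capture a visible robber in one move. Concretely, let $S_G$ and $S_J$ be minimum small common neighbourhood sets of $G$ and $J$, put $S = S_G \cup S_J$, and place one cop on each vertex of $S$; this uses exactly $\Upsilon(G) + \Upsilon(J)$ cops. The computation in the proof of Proposition~\ref{Upsilon Property} gives $\bigcap_{v \in S} N_{G \vee J}(v) = \left(\bigcap_{v \in S_G} N_G(v)\right) \cup \left(\bigcap_{u \in S_J} N_J(u)\right)$, and since these two sets lie in the disjoint parts $V(G)$ and $V(J)$, we get $\left|\bigcap_{v \in S} N_{G \vee J}(v)\right| \le \Upsilon(G) + \Upsilon(J) = |S|$; that is, $S$ is itself a small common neighbourhood set of $G \vee J$.

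Next I would split on the robber's visibility. If the robber is invisible, then by Observation~\ref{upsilon invisible lemma} it lies inside $T := \bigcap_{v \in S} N_{G \vee J}(v)$. Every vertex of $T$ is adjacent to every vertex of $S$ by the definition of $T$, so each cop can step onto any vertex of $T$; since $|T| \le |S|$, the cops can simultaneously occupy all of $T$ on their next move and catch the (necessarily present) robber. If instead the robber is visible, I would invoke Observation~\ref{lemmajoin}: because $S_G$ and $S_J$ are non-empty, there is at least one cop in $G$ and at least one cop in $J$, so a visible robber is captured on the cops' following move. As these two cases are exhaustive at every round, the robber is captured, giving $c_H(G \vee J) \le |S| = \Upsilon(G) + \Upsilon(J)$.

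The point requiring care, and the real content beyond Proposition~\ref{Upsilon Property}, is the visible case: in Theorem~\ref{Upsilon all graph} one spends an extra $c(G)$ cops to run a classical Cops and Robber strategy while the robber is visible, whereas here I claim those cops can be dropped. The justification is purely structural, namely that the join forces $\gamma(G \vee J) \le 2$ and, with a cop guaranteed in each factor, Observation~\ref{lemmajoin} turns visibility into immediate capture, so no pursuit phase is ever needed. I would also note the degenerate possibility that $G$ and $J$ are both complete, in which case the robber is never visible and only the invisible case can occur; this is already consistent with Observation~\ref{lemmajoin} and so needs no separate treatment.
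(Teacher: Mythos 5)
Your proposal is correct and follows essentially the same strategy as the paper's proof: place cops on $S_G \cup S_J$, catch a visible robber via Observation~\ref{lemmajoin}, and catch an invisible robber by moving the cops onto the common neighbourhood, whose size is controlled by the definition of a small common neighbourhood set. The only cosmetic difference is that you route the invisible case through the joint set $\bigcap_{v \in S} N_{G \vee J}(v)$ while the paper assigns the $S_G$-cops and $S_J$-cops to their respective factor intersections; these are equivalent.
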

 
\begin{proof}
Let $S_G$ and $S_J$ be minimum small common neighbourhood sets of $G$ and $J$, respectively. Note that $|S_G| = \Upsilon (G)$ and $|S_J| = \Upsilon(J)$ by definition. Start by placing $\Upsilon(G)$ cops on all vertices in $S_G$ and $\Upsilon(J)$ cops on all vertices in $S_J$. If the robber is visible then, by Observation~\ref{lemmajoin}, they will be caught on the cops' next move. Therefore, we can assume the robber is invisible and thus located in $\bigcap_{w \in S_G} N(w)$ or $\cap_{v \in S_J} N(v)$. Since $|\cap_{w \in S_G} N(w)| \leq |S_G| = \Upsilon(G)$ and $|\cap_{v \in S} N(v)| \leq |S_J| = \Upsilon(J)$, the $\Upsilon(G)$ cops on $S_G$ can move to $\cap_{w \in S_G} N(w)$ and the $\Upsilon(J)$ cops on $S_J$ can move to $\cap_{v \in S_J} N(v)$ to capture the robber.\end{proof}

Recall from Proposition~\ref{Upsilon Property} that for some graphs $G$ and $J$, $\Upsilon(G \vee J) \leq \Upsilon(G) + \Upsilon(J)$. This property motivates the following theorems which, for some graphs, improve upon Theorem \ref{bothupsilon}.

\begin{theorem}
Let $G$ and $J$ be graphs. If there exists a set $S$ such that $S \not\subseteq V(G)$, $S \not\subseteq V(J)$, and $S$ is a minimum small common neighbourhood set of $G \vee J$ then $c_H(G \vee J) \leq \Upsilon(G \vee J)$. \end{theorem}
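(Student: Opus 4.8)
The plan is to reuse the placement idea behind Theorem~\ref{bothupsilon}, but with the single minimum small common neighbourhood set $S$ that straddles both sides of the join, so that no cops beyond those sitting on $S$ are required. First I would place $\Upsilon(G \vee J) = |S|$ cops, one on each vertex of $S$, and record the consequence of the hypotheses: since $S \not\subseteq V(G)$ and $S \not\subseteq V(J)$, the set $S$ meets both $V(G)$ and $V(J)$, so at least one cop sits in $G$ and at least one cop sits in $J$.

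Next I would split into the two visibility cases after the robber chooses its vertex. If the robber is visible, then because there is a cop in each of $G$ and $J$, Observation~\ref{lemmajoin} applies verbatim and the robber is captured on the cops' next move. If the robber is invisible, then by Observation~\ref{upsilon invisible lemma} the robber occupies some vertex of $\bigcap_{v \in S} N(v)$; and since $S$ is a small common neighbourhood set, $\left|\bigcap_{v \in S} N(v)\right| \le |S| = \Upsilon(G \vee J)$, so there are at least as many cops on $S$ as there are candidate vertices for the invisible robber.

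To close the invisible case I would observe that every vertex of $\bigcap_{v \in S} N(v)$ is, by definition, adjacent to each vertex of $S$, hence to the current location of every cop; so any cop on $S$ can move in a single step to any target vertex. Assigning the $|S|$ cops so that every vertex of $\bigcap_{v \in S} N(v)$ receives a cop and moving them simultaneously places a cop on the robber's vertex, capturing it. In both cases the robber is caught using only $|S| = \Upsilon(G \vee J)$ cops, which yields $c_H(G \vee J) \le \Upsilon(G \vee J)$.

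The conceptual heart, and the only place the hypotheses do real work, is that the \emph{split} condition $S \not\subseteq V(G)$, $S \not\subseteq V(J)$ is exactly what guarantees a cop on each side of the join, letting Observation~\ref{lemmajoin} dispatch the visible case \emph{for free}. This is what removes the additive overhead implicit in applying Theorem~\ref{bothupsilon} (where $S_G$ and $S_J$ are chosen independently and their union need not be a minimum small common neighbourhood set of $G \vee J$). The remaining steps are routine given the earlier machinery; the single point I would verify carefully is that the cops can reach all of $\bigcap_{v \in S} N(v)$ in one move, which is immediate from the definition of a common neighbourhood.
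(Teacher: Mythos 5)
Your proposal is correct and takes essentially the same approach as the paper's proof: place a cop on each vertex of $S$, use the straddling condition $S \not\subseteq V(G)$, $S \not\subseteq V(J)$ to invoke Observation~\ref{lemmajoin} in the visible case, and use Observation~\ref{upsilon invisible lemma} together with $\left|\bigcap_{v \in S} N(v)\right| \leq |S| = \Upsilon(G \vee J)$ in the invisible case. Your explicit verification that every vertex of $\bigcap_{v \in S} N(v)$ is adjacent to each cop's position, so the cops can cover the whole candidate set in a single move, is a detail the paper leaves implicit.
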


\begin{proof}
Let $S$ be a minimum small common neighbourhood set of $G \vee J$ where $S \not\subseteq V(G)$ and $S \not\subseteq V(J)$. Start by placing a cop on each vertex in $S$. If the robber is visible (since $S$ has vertices in both $G$ and $J$), by Lemma \ref{lemmajoin} the robber will be caught. Therefore, we can assume that the robber is invisible from the start. By Observation~\ref{upsilon invisible lemma} the robber is located in $\cap_{v \in S} N(v)$ and as $|\cap_{v \in S} N(v)| \leq \Upsilon(G \vee J)$, the cops can move to $\cap_{v \in S} N(v)$ to capture the robber.\end{proof}

\begin{theorem}
Let $G$ and $J$ be graphs. Let $\mathcal{S}$ be the set of all minimum small common neighbourhood sets. If for all sets $S \in \mathcal{S}$,  $S \subseteq V(G)$ or $S \subseteq V(J)$ then $c_H(G \vee J) \leq \Upsilon (G \vee J) + 1$.\end{theorem}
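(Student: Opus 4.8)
The plan is to place cops on the vertices of a minimum small common neighbourhood set $S$ of $G \vee J$ and then add a single extra cop to break the invisibility that arises because $S$ lives entirely inside one factor. By hypothesis, every minimum small common neighbourhood set is contained in $V(G)$ or in $V(J)$; without loss of generality take a fixed minimum set $S$ with $S \subseteq V(G)$, so that $|S| = \Upsilon(G \vee J)$ and $\left|\bigcap_{v \in S} N_{G \vee J}(v)\right| \le |S|$. First I would place $|S|$ cops on the vertices of $S$ and place the one additional cop on an arbitrary vertex $z \in V(J)$. The point of the vertex in $J$ is to guarantee that the configuration of occupied vertices already spans both factors, so that Observation~\ref{lemmajoin} applies: if the robber is ever visible, the cop in $J$ (together with the cops in $G$) captures on the next move.

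The main case is therefore an invisible robber. Since there is a cop on every vertex of $S \subseteq V(G)$ and a cop on $z \in V(J)$, an invisible robber must lie in the common neighbourhood of all occupied vertices, which is contained in $\bigcap_{v \in S} N_{G \vee J}(v) \cap N_{G \vee J}(z)$. I would argue that an invisible robber is in fact confined to the set $A := \bigcap_{v \in S} N_{G \vee J}(v)$, whose size is at most $\Upsilon(G \vee J)$ by the defining inequality. The cops on $S$ can then move onto the at most $\Upsilon(G \vee J)$ vertices of $A$ to effect the capture, exactly as in the proofs of Theorem~\ref{bothupsilon} and the preceding theorem. This accounts for the summand $\Upsilon(G \vee J)$, and the extra cop on $z$ accounts for the $+1$.

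The subtlety I expect to be the genuine obstacle is tracking \emph{where} the invisible robber can be located, and in particular ruling out the possibility that the robber sits on or adjacent to $z$ in a way that lets it slip out of $A$. I would handle this by noting that since $S \subseteq V(G)$, every vertex of $A = \bigcap_{v \in S} N(v)$ that is adjacent to all of $S$ is either in $V(G)$ or in $V(J)$; but every vertex of $J$ is adjacent to every vertex of $G$ in the join, so if $A$ met $V(J)$ the set $S$ could be enlarged or a smaller common-neighbourhood set found, and one must verify that the minimality and the containment hypothesis on $\mathcal{S}$ force $A \subseteq V(G)$ (or otherwise control the $V(J)$-part of $A$ using the extra cop). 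The cleanest route is to observe that the extra cop on $z \in V(J)$ dominates all of $V(G)$, so the robber can become invisible only while standing in $A \cap N(z)$; the cop on $z$ therefore pins the robber to a set no larger than $A$, and the $|S|$ cops on $S$ clean up $A$. I would close by checking the boundary behaviour — that the single cop suffices to force visibility the instant the robber leaves $A$ — so that the robber cannot oscillate between visible and invisible states to evade the sweep of $A$.
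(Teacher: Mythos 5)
Your proof is correct and is essentially the paper's own argument: place cops on a minimum small common neighbourhood set $S \subseteq V(G)$ plus one extra cop in $V(J)$, catch a visible robber via Observation~\ref{lemmajoin}, and pin an invisible robber inside $\bigcap_{v \in S} N(v)$, which has size at most $\Upsilon(G \vee J)$ and which the cops on $S$ then cover in a single move. The concern in your final paragraph is unnecessary: invisibility by definition confines the robber to the common neighbourhood of \emph{all} occupied vertices, so no further control of where that set sits is needed — indeed, since $S \subseteq V(G)$, one automatically has $V(J) \subseteq \bigcap_{v \in S} N(v)$, which is harmless because that intersection still has size at most $|S|$.
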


\begin{proof}
Let $S$ be a minimum small common neighbourhood set of graph $G \vee J$. Without loss of generality, assume that $S \subseteq V(G)$. Start by placing cops on all vertices in $S$ and one cop on some vertex $w \in V(J)$. If the robber is initially visible, they will be caught by Observation~\ref{lemmajoin} (as there is a cop in both $G$ and $J$). If the robber is invisible, by Observation~\ref{upsilon invisible lemma}, we know that the vertex occupied by the robber is in the set $\bigcap_{v \in S}( N(v) \cap N(w)) = \bigcap_{v \in S\cup \{w\}} N(v)$. 

Since $|\bigcap_{v \in S} N(v)| \leq \Upsilon(G \vee J)$, the $\Upsilon(G \vee J)$ cops in $S$ can move to $\bigcap_{v \in S} N(v)$ to capture the robber.\end{proof}

Note that the join of two disconnected graphs will result in a connected graph. Thus, we can consider $G$ and $J$ to be disconnected.  

\begin{corollary}\label{cor:disc}
   Let $G$ and $J$ be  disconnected graphs with $n$ components and  $m$ components respectively. If $n,m \geq 2$, then $c_H(G \vee J) \leq 4$. 
\end{corollary}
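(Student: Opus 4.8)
The plan is to reduce the statement to Theorem~\ref{bothupsilon}, which already gives $c_H(G \vee J) \leq \Upsilon(G) + \Upsilon(J)$ for arbitrary graphs $G$ and $J$. It therefore suffices to prove a single auxiliary fact: any disconnected graph with at least two components has $\Upsilon$ at most $2$. Applying this fact to both $G$ (with $n \geq 2$ components) and $J$ (with $m \geq 2$ components) immediately yields $c_H(G \vee J) \leq 2 + 2 = 4$.

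To bound $\Upsilon(G)$, I would exhibit an explicit small common neighbourhood set of size $2$. Since $G$ has $n \geq 2$ components, choose a vertex $u$ in one component and a vertex $w$ in a different component. Because $u$ and $w$ lie in distinct components, no vertex can be adjacent to both—such a vertex would connect the two components—so $N(u) \cap N(w) = \emptyset$. Hence $|N(u) \cap N(w)| = 0 \leq 2 = |\{u,w\}|$, and $\{u,w\}$ satisfies the defining inequality of a small common neighbourhood set. This gives $\Upsilon(G) \leq 2$, and the identical argument applied to two distinct components of $J$ gives $\Upsilon(J) \leq 2$. Combining these with Theorem~\ref{bothupsilon} then closes the argument.

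As for obstacles, there is little genuine difficulty here—this corollary is essentially a direct application of the machinery already developed. The one point requiring care is the observation that two vertices in distinct components have empty common neighbourhood, which relies on reading $N(\cdot)$ as the \emph{open} neighbourhood, consistent with the earlier diameter-$3$ and universal-vertex computations in the paper (e.g.\ the argument that $d(v,w)\geq 3$ forces $|N(v)\cap N(w)|=0$). One might also note in passing that $\Upsilon(G)$ could be as small as $1$ when some vertex has at most one neighbour, but since only the upper bound $\Upsilon \leq 2$ is needed for each factor, this does not affect the conclusion.
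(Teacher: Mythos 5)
Your proposal is correct and follows essentially the same route as the paper: both arguments pick one vertex in each of two distinct components of $G$ (and likewise of $J$), observe that vertices in different components have empty common neighbourhood, conclude $\Upsilon(G), \Upsilon(J) \leq 2$, and finish by Theorem~\ref{bothupsilon}. Your phrasing of the key step as a standalone auxiliary fact ($\Upsilon \leq 2$ for any graph with at least two components) is a slightly cleaner packaging of the identical idea.
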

\begin{proof}
Let $G_i$ be a component of $G$, for $i \in \{1,2,\,\ldots,n\}$ and $J_j$ a component of $J$, for $j \in \{1,2,\,\ldots,m\}$. 

Clearly, for some vertex $v_1 \in G_1$ and for some vertex $v_{2} \in G_{2}$, $N_G({v_1) \cap N_G(v_{2})} = \emptyset$ as $v_1$ and $v_{2}$ are in different components of $G$. Similarly, for some vertex $w_1 \in V(J_1)$ and for some vertex $w_{2} \in V(J_{2})$, $N_J({w_1) \cap N_J(w_{2})} = \emptyset$ as $w_1$ and $w_{2}$ are in different components of $J$. Therefore $\{v_1,v_{2}\} \cup \{w_1,w_{2}\}$ form small common neighbourhood sets for $G$ and $J$, respectively. By Theorem \ref{bothupsilon} we can see that $c_H(G \vee J) \leq 4$.\end{proof}

With respect to Corollary~\ref{cor:disc}, if a component of either $G$ or $J$ contains a vertex $z$ of degree $1$ or an isolated vertex, then $c_H(G \vee J) \leq 3 $ as $\{z\}$ would form a small common neighbourhood set.

\begin{theorem}
Let $G$ be a connected graph, and $J$ be a disconnected graph with $n$ components, $n \geq 2$. Then $c_H(G \vee J ) \leq c_H(G) +2$.
\end{theorem}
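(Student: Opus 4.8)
The plan is to use two cops to pin any invisible robber inside $V(G)$, and then simulate a winning hyperopic strategy for $G$ with the remaining $c_H(G)$ cops. Since $J$ has $n \geq 2$ components, I would first fix vertices $w_1, w_2$ lying in two distinct components of $J$, so that $N_J(w_1) \cap N_J(w_2) = \emptyset$. The crucial preliminary computation is that in the join $N_{G \vee J}(w_i) = N_J(w_i) \cup V(G)$, whence
\[
N_{G \vee J}(w_1) \cap N_{G \vee J}(w_2) = \bigl(N_J(w_1) \cap N_J(w_2)\bigr) \cup V(G) = V(G).
\]
I would place one cop on each of $w_1, w_2$ and the remaining $c_H(G)$ cops on $V(G)$ according to the initial configuration of a winning hyperopic strategy for $G$. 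Because there is always a cop in each of $G$ and $J$, Observation~\ref{lemmajoin} guarantees that the robber is captured on the cops' next move whenever it is visible.

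Next I would argue that the robber can never hide in $J$. If the robber occupies a vertex $r \in V(J)$, then to be invisible it would need $r \in N_{G \vee J}(w_1) \cap N_{G \vee J}(w_2) = V(G)$, which is impossible; hence any robber in $V(J)$ is visible and is caught by Observation~\ref{lemmajoin}. Consequently the robber can survive only while invisible, and by Observation~\ref{upsilon invisible lemma} together with the computation above, an invisible robber must lie in $V(G)$.

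The heart of the argument is then to show that, restricted to $V(G)$, the game coincides with the ordinary hyperopic game on $G$. For a cop occupying $c \in V(G)$ and a robber at $r \in V(G)$ we have $r \in N_{G \vee J}(c)$ if and only if $r \in N_G(c)$, while every $J$-vertex (in particular $w_1, w_2$) is automatically adjacent to $r$. Therefore a robber at $r \in V(G)$ is invisible in $G \vee J$ exactly when $r$ lies in the common $G$-neighbourhood of the $c_H(G)$ cops placed in $G$ --- precisely the invisibility condition of the hyperopic game on $G$. Keeping the two cops fixed on $w_1, w_2$, I would have the $c_H(G)$ cops in $G$ follow their winning hyperopic strategy on $G$, tracking the robber's image as in that game. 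At each step the robber either stays invisible inside $V(G)$, matching a legal position of the $G$-game, or moves so as to become visible (including any move into $J$), in which case Observation~\ref{lemmajoin} ends the game; since the $G$-strategy guarantees capture in the stand-alone game, the robber cannot remain invisible indefinitely, so it is captured using $c_H(G) + 2$ cops in total.

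The main obstacle I anticipate is the bookkeeping needed to make the simulation rigorous: one must confirm that the robber's additional freedom in $G \vee J$ --- namely the option to jump to any vertex of $J$ --- never helps it, and that the two stationary cops on $w_1, w_2$ do not alter the robber's image within $V(G)$. Both points reduce to the adjacency computation above, which shows that leaving $V(G)$ forces visibility and that the presence of $w_1, w_2$ adds nothing to the robber's possible positions inside $V(G)$; once these are established, the reduction to the hyperopic game on $G$ is clean.
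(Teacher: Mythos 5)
Your proposal is correct and follows essentially the same strategy as the paper: two cops placed in distinct components of $J$ force any robber in $V(J)$ to be visible (hence captured via Observation~\ref{lemmajoin}), confining an invisible robber to $V(G)$, where the remaining $c_H(G)$ cops execute a hyperopic winning strategy for $G$. Your explicit verification that visibility within $V(G)$ depends only on the cops in $G$ (since every $J$-vertex is adjacent to all of $V(G)$) is a detail the paper leaves implicit, and it strengthens rather than alters the argument.
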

\begin{proof}
Let $J_i$ be a component of $J$, for $i \in \{1,2,\,\ldots,n\}.$ In $G \vee J$, we place one cop on a vertex $v_1 \in V(J_1)$, one cop on a vertex $v_{2} \in V(J_{2})$, and we place the remaining $c_H(G)$ on vertices of $G$. If the robber moves to a vertex of $J$, the robber will be visible  as $N_J({v_i) \cap N_J(v_{i+1})} = \emptyset$. The robber can then be caught immediately because there are cops in both $G$ and $J$, satisfying Observation \ref{lemmajoin}. Thus the robber must remain on vertices of $G$ where the $c_H(G)$ cops can carry out their winning strategy and catch the robber.\end{proof}

\section{Graph Products: Cartesian Product}
\label{sec:cartesian product}

Graph joins enable us to construct a new graph based on two input graphs.  Cartesian products also allow us to construct new graphs based on input graphs.  In this section, we bound the hyperopic cop number of the Cartesian product of graphs.

Let \(G\) and $J$ be connected, finite graphs with  \(V(G) = \{x_0,x_1,\,\ldots,x_{m}\}\) and \(V(J) = \{ y_0,y_1,\,\ldots,y_{n}\}\). The \emph{Cartesian product} of \(G\) and \(J\), denoted \( G \square J\), is the graph with \(V(G) \times V(J)\) as its vertex set. An edge $e \in E(G \square J)$ if and only if \(e =(x_i,y_j)(x_k,y_l)\) where either: \( i = k\) and \(y_j y_l \in E(J)\) or \( j = l\) and \(x_ix_k \in E(G)\).

Note that, unlike graph joins, Cartesian products do not guarantee a graph of diameter 2 or less. Instead, $diam(G \square J) = diam(G) + diam(J)$.  If $diam(G) \geq 2$ or $diam(J) \geq 2$, clearly $diam(G \square J) \geq 3$. This property allows us to exploit Theorem~6 from~\cite{bonato}, which states that if $diam(G) \geq 3$ then $c_H(G)\leq c(G)+2$.

\begin{corollary}
\label{Cartesian diameter 3}
   Let $G$ and $J$ be graphs where $diam(G) \geq 2$ or $diam(J) \geq 2$. Then $c_H(G \square J) \leq c(G)+ c(J) +2$.
\end{corollary}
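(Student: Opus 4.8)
The plan is to treat this as a direct corollary of the diameter formula for Cartesian products together with Bonato et al.'s bound for graphs of diameter at least $3$. First I would note that the hypothesis---$diam(G) \geq 2$ or $diam(J) \geq 2$---combined with the identity $diam(G \square J) = diam(G) + diam(J)$ recalled just above, immediately forces $diam(G \square J) \geq 3$, since each (nontrivial connected) factor contributes at least $1$ to the diameter of the product. This is exactly the situation in which Theorem~\ref{thm:bonato}(1) (equivalently, Theorem~6 of~\cite{bonato}) applies, yielding $c_H(G \square J) \leq c(G \square J) + 2$.

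The second ingredient is the behaviour of the ordinary cop number under Cartesian products. I would invoke the classical bound $c(G \square J) \leq c(G) + c(J)$ of Maamoun and Meyniel, whose strategy assigns one team of cops to shadow the first coordinate of the robber while a second team shadows the second coordinate, so that once the first team has matched the robber's $G$-coordinate and the second its $J$-coordinate, the robber is captured. Substituting this into the inequality from the first step gives
\[
c_H(G \square J) \;\leq\; c(G \square J) + 2 \;\leq\; c(G) + c(J) + 2,
\]
which is the desired conclusion.

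There is no serious obstacle here: the statement is essentially a concatenation of two known facts, and in particular the parameter $\Upsilon$ developed earlier in the paper is not needed for it. The only points requiring a little care are (i) ensuring the diameter hypothesis genuinely guarantees $diam(G \square J) \geq 3$---one should assume each factor has at least one edge, so that the diameter formula gives a product diameter of at least $2 + 1 = 3$---and (ii) citing the product cop-number bound correctly. If one preferred a self-contained argument that avoids the external citation, the mild extra work would be to reproduce the two-team shadowing strategy establishing $c(G \square J) \leq c(G) + c(J)$; but since this is standard, citing it is the cleaner route and I would expect the published proof to do exactly that.
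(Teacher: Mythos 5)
Your proposal is correct and follows essentially the same route as the paper, which likewise combines the identity $diam(G \square J) = diam(G) + diam(J)$ with Theorem~6 of~\cite{bonato} and the product bound $c(G \square J) \leq c(G) + c(J)$ (which the paper attributes to To\v{s}i\'{c}~\cite{tosic} rather than Maamoun--Meyniel). Your added caveat that the factor of smaller diameter must be nontrivial (so that the product diameter is indeed at least $3$) is a point the paper glosses over, and is worth keeping.
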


If we instead consider the situation where $diam(G) \not\geq 2$ and $diam(J) \not\geq 2$, then it must be that $diam(G)=diam(J)=1$, which implies $G$ and $J$ are complete graphs.  We consider this situation next.

\begin{theorem}
\label{cartesian complete graphs}
If $n,m \geq 2$ then $c_H(K_n \square K_m) \leq 4$. 
\end{theorem}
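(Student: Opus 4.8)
The plan is to invoke Theorem~\ref{Upsilon all graph}, which states that $c_H(G) \le c(G) + \Upsilon(G)$ for every graph $G$. Taking $G = K_n \square K_m$, I would first observe that this is the $n \times m$ \emph{rook's graph}: identifying each vertex $(x_i,y_j)$ with a cell of an $n \times m$ array, two distinct cells are adjacent exactly when they share a row (equal $K_n$-coordinate) or share a column (equal $K_m$-coordinate), since $K_n$ and $K_m$ are complete. With this picture in hand, it suffices to prove the two estimates $\Upsilon(K_n \square K_m) \le 2$ and $c(K_n \square K_m) \le 2$, because together they give $c_H \le 2 + 2 = 4$.

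For the first estimate, since $n,m \ge 2$ I can choose two cells $u = (x_0,y_0)$ and $v = (x_1,y_1)$ lying in distinct rows and distinct columns. A cell belongs to $N(u) \cap N(v)$ precisely when it shares a row or a column with $u$ and also shares a row or a column with $v$; as $u$ and $v$ agree in neither coordinate, the only cells meeting both conditions are the two ``opposite corners'' $(x_0,y_1)$ and $(x_1,y_0)$. Hence $|N(u) \cap N(v)| = 2 = |\{u,v\}|$, so $\{u,v\}$ is a small common neighbourhood set and $\Upsilon(K_n \square K_m) \le 2$.

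For the second estimate I would give a direct two-cop strategy for the ordinary, fully visible game (one could instead cite the classical bound $c(G \square H) \le c(G) + c(H)$ with $c(K_n) = c(K_m) = 1$, but a self-contained argument is short). Place $C_1$ and $C_2$ arbitrarily. On the first cops' move, send $C_1$ along its own column to the robber's current row and $C_2$ along its own row to the robber's current column; each is a single legal step because the factors are complete, and if a cop already shares the robber's other coordinate the robber is captured at once. Otherwise, after this move the invariant ``$C_1$ occupies the robber's row and $C_2$ occupies the robber's column'' holds at the start of the robber's turn. The robber can only move within its own cross (its row or its column), so after any robber move it still shares a row with $C_1$ or a column with $C_2$, whereupon the relevant cop slides onto the robber's cell and captures it. This yields $c(K_n \square K_m) \le 2$.

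Combining the two estimates through Theorem~\ref{Upsilon all graph} gives $c_H(K_n \square K_m) \le c(K_n \square K_m) + \Upsilon(K_n \square K_m) \le 4$. In terms of the hyperopic game this amounts to stationing two cops at $u$ and $v$ to keep the robber visible, pinning an invisible robber to the two corner cells that these cops jointly dominate (recapturing it the instant it vanishes), while the remaining two cops execute the pursuit above whenever the robber is visible. The step requiring the most care is the two-cop pursuit: I must verify the turn order so that the described invariant is genuinely attained at the start of a robber's turn, guaranteeing that the robber, confined to its cross, cannot escape both cops simultaneously.
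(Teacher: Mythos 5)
Your proposal is correct and follows essentially the same route as the paper: exhibit two vertices differing in both coordinates to get $\Upsilon(K_n \square K_m) \le 2$, note $c(K_n \square K_m) \le 2$, and combine via Theorem~\ref{Upsilon all graph}. The only difference is that the paper cites To\v{s}i\'{c} for $c(K_n \square K_m) \le c(K_n) + c(K_m) = 2$ where you give a short self-contained rook-pursuit strategy (and your choice of the set $\{u,v\}$ with both coordinates distinct is in fact cleaner than the paper's, which contains a small typo in naming its small common neighbourhood set).
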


\begin{proof}
Label the vertices of $K_n$ and $K_m$ as $v_1, v_2, \ldots v_n$ and $w_1, w_2, \ldots w_m$, respectively.

Clearly, $N((v_1, w_1)) \cap N((v_2, w_2)) = \{(v_2, w_1),(v_1, w_2)\}$. Therefore $\{(v_1, w_1), (v_1, w_2)\}$ forms a small common neighbourhood set and thus $\Upsilon(K_n \square K_m) = 2$. By Theorem \ref{Upsilon all graph}, we can see that $c_H(K_n \square K_m) \leq c(K_n \square K_m) + 2$. Furthermore, from \cite{tosic}, we know that $c(K_n\square K_m) \leq c(K_n) + c(K_m) = 2$. Thus, $c_H(K_n \square K_m) \leq 4$. \end{proof}

 We next improve the bound for graphs where $c(G) = c_H(G)$. The following proof mirrors that of Theorem $1$ in~\cite{tosic} which proves $c(G \square J) \leq c(G) + c(J)$.

\begin{theorem}
\label{graph and graph}
If \(G\) and \(J\) are finite connected graphs then \( c_H(G \times J) \leq c_H(G) + c_H(J) \).\end{theorem}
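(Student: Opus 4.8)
The plan is to mirror the classical shadow argument of~\cite{tosic} for $c(G\square J)\le c(G)+c(J)$, partitioning the $c_H(G)+c_H(J)$ cops into a team $A$ of size $c_H(G)$ and a team $B$ of size $c_H(J)$, assigning team $A$ the task of controlling the $G$-coordinate of the robber and team $B$ the $J$-coordinate. Writing $R=(x_r,y_r)$ for the robber and $\pi_G,\pi_J$ for the coordinate projections, I would first record the structural fact that a single edge-move of $R$ in $G\square J$ projects to either a pass or a single edge-move in each factor, so the \emph{shadows} $\pi_G(R)$ and $\pi_J(R)$ are legitimate (possibly stationary) robber trajectories in $G$ and $J$. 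Team $A$ then simulates a hyperopic game on $G$ in which its target is $\pi_G(R)$, realising each of its $G$-moves by moving only in the $G$-direction (keeping $J$-coordinates fixed), and team $B$ does the symmetric thing on $J$. Since $c_H(G)$ cops win the hyperopic game on $G$ against \emph{any} robber, and the shadow is one such robber, team $A$ will capture $\pi_G(R)$, and likewise team $B$ will capture $\pi_J(R)$.

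The key ingredient absent from the visible-robber setting of~\cite{tosic} is a \emph{visibility-translation lemma}, which I would prove as follows. If $R$ is invisible in the product game then every cop $(x_c,y_c)$ lies in $N_{G\square J}[R]$; since a product-edge changes exactly one coordinate, this forces $x_c\in N_G[x_r]$ and $y_c\in N_J[y_r]$ for \emph{every} cop. In particular the shadow $\pi_G(R)$ is invisible in team $A$'s simulated $G$-game and $\pi_J(R)$ is invisible in team $B$'s $J$-game. Contrapositively, whenever either shadow is visible in its factor game, some cop of the corresponding team fails to lie in $N_{G\square J}[R]$, so $R$ is visible in the product and that team can read off the shadow's exact location. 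This is precisely the information each team needs to run its factor strategy faithfully: each team learns its shadow exactly when its factor game declares it visible, and follows the invisible-case moves otherwise. (A team may occasionally have \emph{more} information than its factor game supplies, since $R$ can be product-visible while a shadow is invisible, but such extra information can be harmlessly ignored.)

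From here I would follow~\cite{tosic}: once team $A$ has captured the $G$-shadow it maintains a cop on $\pi_G(R)$ by copying the robber's $G$-moves and otherwise holding its $G$-coordinate fixed, which confines the effective play to the moving fibre $\{x_r\}\times V(J)$, a copy of $J$ that always contains a team-$A$ cop. Inside this fibre team $B$'s capture of the $J$-shadow should be upgradable to an honest capture of $R$, since a cop agreeing with $R$ in both coordinates sits on $R$. Throughout this phase the visibility-translation lemma continues to certify that both teams retain the information their factor strategies require.

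The hard part will be the coordination and endgame, exactly as in the classical Cartesian-product argument but now carrying the visibility bookkeeping. Capturing the two shadows independently yields only one cop matching $x_r$ and a (possibly different) cop matching $y_r$; converting this into a single cop matching \emph{both} coordinates requires the careful sequencing of capture-then-maintain in $G$ followed by a fibre-confined finish in $J$, together with a delicate treatment of the cops-move-first timing inherent in maintaining a shadow (a cop can track $x_r$ in the $G$-direction or chase $y_r$ in the $J$-direction in a given round, but not both, so one must exploit that the robber also changes only one coordinate per move). I would need to verify that these maintenance moves never produce a configuration in which $R$ becomes product-invisible in a way that corrupts team $B$'s $J$-game — that is, that the hypotheses of the visibility-translation lemma persist under the maintenance phase — and that the final $J$-capture genuinely lands a cop on $R$ rather than merely on its $J$-shadow. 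Confirming that these steps survive unchanged from the visible case, given the lemma, is the crux.
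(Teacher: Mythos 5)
Your visibility-translation lemma is correct, and it is in fact a cleaner treatment of a point the paper itself disposes of with only a parenthetical remark: since a product edge changes exactly one coordinate, product-invisibility forces $x_c\in N_G[x_r]$ and $y_c\in N_J[y_r]$ for every cop, so whenever a factor game declares a shadow visible the robber is visible in the product and the relevant team can read off the shadow's location. That part of the proposal is sound. The genuine gap is the endgame, precisely the step you flag as "the crux" and leave unresolved. Capturing the two shadows independently yields one cop matching $x_r$ and a different cop matching $y_r$, and these cannot be merged into a capture: each cop changes only one coordinate per round, so the team-$B$ cop matching $y_r$ but sitting in the wrong fibre would have to chase $x_r$ single-handedly in the $G$-direction, a one-cop pursuit of a (possibly invisible) shadow in $G$, which fails in general. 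Likewise your "fibre-confined finish in $J$" needs $c_H(J)$ cops \emph{inside} the moving fibre $\{x_r\}\times V(J)$, i.e.\ $c_H(J)$ cops that each individually match and maintain the robber's $G$-coordinate; team $A$'s single maintained tracker is not enough, and under your strategy team $B$'s cops never acquire the matching $G$-coordinate at all, so their $J$-shadow capture is wasted work.

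The paper's proof resolves exactly this by \emph{not} running the two factor games independently: it runs the $G$-capture $c_H(J)$ times. The $c_H(G)$ cops play the hyperopic $G$-strategy in the copy $G_1$, leave the capturing cop $g_1$ behind as a permanent tracker of the first coordinate, then the remaining $c_H(G)-1$ cops join a pre-placed cop in $G_2$ and repeat, producing trackers $g_1,\dots,g_{c_H(J)}$, one per copy, using exactly $c_H(G)+c_H(J)$ cops in total. Those $c_H(J)$ trackers all match $x_r$, hence all lie in the moving fibre, and they jointly play the hyperopic $J$-strategy there: when the robber moves in the $G$-direction they all follow (a pass in the $J$-game), and when it moves in the $J$-direction they make their $J$-moves. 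A further point your sketch omits entirely is stalling: the robber can freeze the $J$-game forever by changing its $G$-coordinate every round; the paper kills this by sending the $c_H(G)$ now-free cops to the copy $G_q$ indexed by the robber's frozen second coordinate and running the $G$-strategy there, forcing either capture or a second-coordinate move. Your argument would need to be restructured along these lines — iterated $G$-captures producing $c_H(J)$ trackers, plus an anti-stalling team — rather than merely "confirmed to survive from the visible case" as you propose.
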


\begin{proof}\label{general cartesian} Let $G$ and $J$ be graphs and $V(G)=\{u_1,u_2,\dots,u_m\}$ and $V(J)=\{v_1,v_2,\dots,v_n\}$. For $i \in [n]$, let $G_i$ denote the induced subgraph of $G \square J$ with vertex set $V(G_i) = \{(u_1,v_i)$, $(u_2,v_i)$, $\dots,(u_m,v_i)\}$ and, for $k \in [m]$, let $J_k$ denote the induced subgraph of $G \square J$ with vertex set $V(J_k) = \{(u_k,v_1),(u_k,v_2),\dots,(u_k,v_n)\}$.  Further observe that $G_i \cong G$ for all $i \in [n]$ and $J_k \cong J$ for all $k \in [m]$.  In describing how the robber will be captured, the cops will move in two phases.\medskip

\underline{Phase 1:} Initially, $c_H(G)$ cops are placed on vertices of subgraph $G_1$ and one cop is placed on a vertex of $G_i$, for $2 \leq i \leq c_H(J)+1$.  

The $c_H(G)$ cops in $G_1$ follow a hyperopic winning strategy on $G_1$ in order to capture the first coordinate of the robber; that is, although the robber plays on vertices of $G \square J$, the cops will restrict their movements to subgraph $G_1$ and follow a winning strategy on $G_1$.  Once a cop, $g_1$ moves to occupy the same first coordinate of the robber, we say the cops on $G_1$ have captured the first coordinate of the robber.  Cop $g_1$ remains on the vertices of $G_1$ for the remainder of phase 1 and, whenever the robber changes his first coordinate, cop $g_1$ will change his first coordinate to match.  (Note that $g_1$ can maintain capture of the first coordinate because there can always be at least one cop on the graph that is not adjacent to the robber, so $g_1$ will always know where the robber is.)

Inductively, once cop $g_i$ has captured the first coordinate of the robber, the other $c_H(G)-1$ cops on $G_i$ will move to $G_{i+1}$.  Now there are $c_H(G)$ cops on $G_{i+1}$.  Similarly to the previous situation, the $c_H(G)$ cops on $G_{i+1}$ follow a hyperopic winning strategy until a cop $g_{i+1}$ moves to occupy the same first coordinate of the robber.  Cop $g_{i+1}$ remains on the vertices of $G_{i+1}$ for the remainder of phase 1 and whenever the robber changes their first coordinate, cop $g_{i+1}$ will change their first coordinate to match.

Once cops $g_1,g_2,\dots,g_{c_H(J)}$ have captured (and maintained) the first coordinate of the robber, we move to Phase 2.  \medskip

\underline{Phase 2:} For $i \in \{1,2,\dots,c_H(H)\}$, cop $g_i$ has the same first coordinate as the robber just before the robber's turn.  In this phase, whenever the robber changes their first coordinate, the cops $g_1,g_2,\dots,g_{c_H(J)}$ change their first coordinate to match the robber's.  Whenever the robber changes his second coordinate, the cops $g_1,g_2,\dots,g_{c_H(J)}$ move according to a hyperopic winning strategy.  More precisely, suppose the robber moves from $(u_k,v_\ell)$ to $(u_k,v_p)$.  We observe that the robber is located in $J_k$ before and after they move.  And the cops are also located in subgraph $J_k$.  Thus, they can move according to a hyperopic winning strategy on $J$.

The only issue is if the robber continually changes their first coordinate.  In this case, because there are an additional $c_H(G)$ cops that are not currently in use (i.e.~the cops other than $g_1,g_2,\dots,g_{c_H(J)}$), these cops will prevent the robber from continually changing their first coordinate.  To see this, suppose the robber continually changed their first coordinate.  Then their second coordinate, say $v_q$, would remain constant.  The $c_H(G)$ currently unused cops would move to subgraph $G_q$ and apply a hyperopic winning strategy on $G_q$; this would force the robber to either be captured or change their second coordinate.\end{proof}

From \cite{bonato} we know that $c_H(K_n) = \lceil\frac{n}{2} \rceil$ and, since $P_m$ is a tree, that $c_H(P_m) = 1$. Therefore, by Theorem \ref{graph and graph}, we can see that $c_H(K_n \square P_m) \leq \lceil\frac{n}{2} \rceil + 1$. In Theorem~\ref{thm:kP}, we determine $c_H(K_n \square P_m)$, which illustrates where Theorem \ref{graph and graph} provides too large of an upper bound on $c_H(K_n \square P_m)$. First, we state a useful lemma.

\begin{lemma}
\label{complete times p2}
Let $K_n$ be a complete graph where $|V(K_n)| = n$ and $P_2$ be a path with $2$ vertices. Two cops are sufficient to capture a robber in $K_n \square P_2$ in at most one move. 
\end{lemma}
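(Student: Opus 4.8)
The plan is to exploit the very simple structure of $K_n \square P_2$: it consists of two disjoint copies of $K_n$ (the ``layers'' over $y_1$ and $y_2$) together with a perfect matching joining each vertex $(v_i, y_1)$ to its partner $(v_i, y_2)$. I would place the two cops on a single matching edge, say on $(v_1, y_1)$ and $(v_1, y_2)$, and argue that this one placement simultaneously forces the robber to be visible and puts every vertex within a single cop move.

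First I would verify that the robber can never be invisible. A direct computation gives $N((v_1,y_1)) = \{(v_k, y_1) : k \neq 1\} \cup \{(v_1, y_2)\}$ and $N((v_1, y_2)) = \{(v_k, y_2): k \neq 1\} \cup \{(v_1, y_1)\}$; the first set lies in layer $y_1$ (apart from the single partner vertex) and the second in layer $y_2$, so their intersection is empty. By Observation~\ref{upsilon invisible lemma}, with a cop on each of $(v_1,y_1)$ and $(v_1,y_2)$, an invisible robber would have to occupy a vertex of this (empty) common neighbourhood, which is impossible. Hence, wherever the robber places itself, it is visible to the cops.

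Next I would show that the two cops dominate $K_n \square P_2$ at distance one. The cop on $(v_1, y_1)$ is adjacent to every other vertex of layer $y_1$ (since that layer induces $K_n$), and the cop on $(v_1, y_2)$ is adjacent to every other vertex of layer $y_2$; together with the two occupied vertices themselves, this accounts for all $2n$ vertices. Consequently, once the necessarily visible robber has chosen a vertex $(v_i, y_j)$, the cop lying in layer $y_j$ is adjacent to it and can step onto $(v_i, y_j)$ on the cops' move, capturing the robber. This uses at most one move, giving the claim.

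The argument is short, and the only point requiring care is confirming that the chosen placement on a single rung achieves both goals at once: it is exactly the fact that the two endpoints of a matching edge have disjoint open neighbourhoods (guaranteeing permanent visibility) while their neighbourhoods jointly cover both copies of $K_n$ (guaranteeing one-step capture). I would treat the base case $n = 2$ (where $K_2 \square P_2 \cong C_4$ and the two cops sit on adjacent vertices) as a sanity check, since there the neighbourhoods are smallest and the covering claim is least obvious.
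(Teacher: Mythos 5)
Your proof is correct, but it takes a genuinely different route from the paper's. The paper places the two cops \emph{diagonally}, on $(k_u,p_1)$ and $(k_v,p_2)$ with $u \neq v$, so that the cop positions have exactly two common neighbours, $(k_u,p_2)$ and $(k_v,p_1)$; the argument then splits into cases: if the robber is invisible it must sit on one of those two vertices and the two cops move onto them simultaneously, while if it is visible the cop in the robber's layer captures it. You instead place both cops on a single rung, $(v_1,y_1)$ and $(v_1,y_2)$, observe that these two vertices have empty common (open) neighbourhood so the robber can never be invisible, and finish with a domination argument --- one case instead of two, which is arguably cleaner. Two remarks on what each choice buys. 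Your rung placement is special to the factor $P_2$: in $K_n \square K_m$ with $m \geq 3$, the two ends of a rung share the other $m-2$ vertices of that column as common neighbours, whereas the diagonal placement always leaves exactly two common neighbours no matter the factors; this is why the diagonal pair is the one reused to show $\Upsilon(K_n \square K_m) = 2$ in Theorem~\ref{cartesian complete graphs}. Second, the paper's proof doubles as the endgame of Theorem~\ref{thm:kP}: there the stationary cop and the descending cop end up precisely in the diagonal configuration $(k_u,p_1)$, $(k_v,p_2)$ with $u \neq v$, and the lemma's strategy is invoked from that position. Your strategy starts from a rung, so it proves the lemma as stated (the cops are free to choose their initial placement) but would not plug into that application verbatim; one would either need the cops to reach a rung configuration or fall back on the diagonal analysis.
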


\begin{proof}
Let $V(K_n) = \{k_1, k_2,\,\ldots, k_n\}$ and $V(P_2) = \{p_1, p_2\}$. For $i \in \{1,2\}$, let $K_n^i$ denote the induced subgraph of $K_n \square P_2$ with vertex set $V(K_n^i) = \{(k_1, p_j), (k_2, p_j),\,\ldots, (k_n, p_j)\}$, for $j \in [n]$. If $n =1$ or $n=2$, the cop's strategy is straightforward; we will consider $n \geq 3$. 

Start by placing one cop on vertex $(k_u, p_1)$, where $u \in [n]$, and the other cop on vertex $(k_v, p_2)$, where $v \in [n]$ and $ v > u$. Notice that $(k_u, p_1)$ and $(k_v, p_2)$ share only $2$ common neighbours $(k_u, p_2)$ and $(k_v, p_1)$. If the robber is invisible, the cops can infer the robber is situated on either of the common neighbours and move to  $(k_u, p_2)$ and $(k_v, p_1)$ to catch the robber in one move. 
 
If the robber is visible then they are either in $K_n^1$ or $K_n^2$. If the robber is in $K_n^1$ the cop on $(k_u, p_1) $ can move to capture the robber. Otherwise if the robber is in $K_n^2$, the robber on $(k_v, p_2)$ can move to capture the robber in one move. 
\end{proof}

\begin{theorem}\label{thm:kP}
For a complete graph $K_n$ with $|V(K_n)| = n$ and a path $P_m$ with $|V(P_m)| = m$, then $c_H(K_n \square P_m) = 2.$ 
\end{theorem}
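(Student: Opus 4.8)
The plan is to prove the two inequalities separately, with essentially all of the work in the upper bound. Throughout I take $n,m\ge 2$; the degenerate cases $n=1$ or $m=1$ reduce $K_n\square P_m$ to a path or to a single $K_n$, for which $c_H$ is already known and is not our concern here. For the lower bound, observe that for $n,m\ge 2$ the graph $K_n\square P_m$ contains the induced $4$-cycle on $(k_1,p_1),(k_2,p_1),(k_2,p_2),(k_1,p_2)$ (the two diagonals are non-edges since an edge of a Cartesian product changes exactly one coordinate). Hence $K_n\square P_m$ contains a cycle, and so $c_H(K_n\square P_m)\ge 2$ by~\cite{bonato}. It therefore remains to exhibit a winning strategy for two hyperopic cops.

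For the upper bound I would view $K_n\square P_m$ as $m$ consecutive ``layers'' $L_1,\dots,L_m$, where $L_i=\{(k_j,p_i):j\in[n]\}$ induces a copy of $K_n$ and consecutive layers $L_i,L_{i+1}$ are joined by a matching along equal first coordinates. Two structural facts drive the strategy. First, since each $L_i$ is complete, a single cop in $L_i$ is adjacent to every vertex of $L_i$, so a cop that shares a layer with a \emph{visible} robber captures it on the next move. Second, a direct computation of common neighbourhoods shows that if the two cops share a first coordinate $c$ in two consecutive layers, i.e.\ occupy $(c,p_i)$ and $(c,p_{i+1})$, then $N((c,p_i))\cap N((c,p_{i+1}))=\emptyset$; by Observation~\ref{upsilon invisible lemma} the robber is then visible throughout this formation.

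The strategy is a left-to-right sweep. The cops start on $(c,p_1),(c,p_2)$ and maintain the invariant that they hold a formation $(c,p_i),(c,p_{i+1})$ while the robber's path-coordinate is at least $i+2$ (so the robber lies strictly to the right of the trailing cop). Each round the cops advance the formation one layer, to $(c,p_{i+1}),(c,p_{i+2})$. Because the robber is always visible in this formation, I would argue it can never cross leftward unseen: any robber move into a cop's layer (without stepping onto a cop) lands it on a vertex adjacent to exactly one of the two cops, hence visible, and the cop sharing that layer then captures it. Thus a rational robber is forced to keep its path-coordinate strictly above the trailing cop's layer, and the advancing formation pushes the robber's path-coordinate upward by one each round.

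The sweep drives the robber into $L_m$, confining it to the last two layers $\{p_{m-1},p_m\}$, whose induced subgraph is precisely $K_n\square P_2$; here I would conclude with the one-move capture of Lemma~\ref{complete times p2} (equivalently, by a direct boundary check: once the cops reach $(c,p_{m-1}),(c,p_m)$, the robber, trapped in $L_m$ or forced onto $L_{m-1}$, is adjacent to exactly one cop that shares its layer and is captured next move). I expect the main obstacle to be the invisibility bookkeeping rather than the pursuit itself: one must verify carefully that the same-column/adjacent-layer formation truly leaves \emph{no} hiding vertex, and that every attempt by the robber to exploit invisibility in order to cross the formation instead exposes it to a same-layer cop. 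Getting the move order right at the boundary is the other delicate point, and it is exactly there that the completeness of $K_n$ -- letting one cop cover an entire layer in a single step -- does the work, as encapsulated by Lemma~\ref{complete times p2}.
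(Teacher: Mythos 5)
Your proof is correct, but it takes a genuinely different route to the upper bound than the paper does. The paper anchors one cop permanently on $(k_u,p_1)$ to guard the first layer and marches the second cop alone from $(k_v,p_m)$ down the path to $(k_v,p_2)$ (with $u \neq v$), compressing the robber's territory toward layer $2$; crucially, its final configuration $(k_u,p_1),(k_v,p_2)$ has exactly \emph{two} common neighbours, so invisibility is genuinely possible at the end, and the paper closes by invoking Lemma~\ref{complete times p2}, whose cops cover both common neighbours in a single move. You instead move both cops in tandem in the same $K_n$-coordinate on adjacent layers $(c,p_i),(c,p_{i+1})$, a formation whose common neighbourhood is empty, so by Observation~\ref{upsilon invisible lemma} the robber is visible for the \emph{entire} game and is swept toward layer $m$; completeness of each layer then gives the same-layer capture whenever the robber touches a cop's layer. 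What each approach buys: yours eliminates the invisibility endgame altogether (Lemma~\ref{complete times p2} becomes an optional convenience --- and note your citation of it is slightly loose, since that lemma's initial formation uses \emph{distinct} first coordinates while your cops share one, though your direct boundary check at layers $p_{m-1},p_m$ covers this), whereas the paper's strategy needs only one cop to move during the sweep and reuses its lemma verbatim. Your lower bound (an explicit induced $4$-cycle forcing $c_H \geq 2$ via the tree characterization in~\cite{bonato}) is the same as the paper's, just with the cycle exhibited concretely.
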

\begin{proof}
From \cite{bonato}, we know $c_H(G) = 1$ if and only if $G$ is a tree. Since $K_n \square P_m$ contain cycles, $c_H(K_n \square P_m) \geq 2$.

Let $V(K_n) = \{k_1, k_2,\,\ldots, k_n\}$ and $V(P_m) = \{p_1, p_2,\, \ldots, p_m\}$. For $i \in [m]$, let $K_n^i$ denote the induced subgraph of $K_n \square P_m$ with vertex set $V(K_n^i) = \{(k_1, p_j), (k_2, p_j),\,\ldots, (k_n, p_j)\}$, for $j \in [n]$.

Start by placing one cop on $(k_u,p_1)$,  where $ u \in [n]$, and the other on $(k_v, p_m)$, where $v \in [n], v < u$. Notice  $(k_u,p_1) \in K_n^1$ and   $(k_v, p_m) \in K_n^m$. Since the cops do not share any common vertices at this point, we can assume that the robber is visible. Additionally, the robber will be restricted to the induced subgraph  $K_n \square P_m \setminus K_n^1 \cup K_n^m $ because if the robber is placed on $K_n^1$ or $K_n^m$ the robber will be seen and caught by the cop on $(k_u,p_1)$ or $(k_v, p_m)$, respectively. 

We will proceed using the following strategy: leave one cop on $(k_u,p_1)$ to guard $K_n^1$ and move the cop on $(k_v, p_m)$ along the path $P = (k_v, p_m),(k_v, p_{m-1}),\,\ldots, (k_v, p_2)$. Each time the cops access a new $K_m^i$, the robber is restricted to the induced subgraph $K_n \square P_m \setminus  \bigcup_{x=i}^{m} K_n^{x} \cup K_n ^1$. 

As the induced subgraph that the robber can be situated on decreases, they will eventually be on $K_n^2$. Once the robber travelling down $P$ is on $(k_v, p_2)$, the cops will be able to catch the robber during their next move using the strategy outlined in Theorem \ref{complete times p2}. 
\end{proof}

\section{Open Questions}

There still exist a variety of open problems associated with the hyperopic cop number. Most notably, the following conjectures involving small common neighbourhood sets remain unproven. 

\begin{conjecture}
For any connected graphs $G$ and $J$, $$c_H(G \vee J) \leq \min\{c_H(G), \Upsilon(G)\} + \min\{c_H(J), \Upsilon(J)\}.$$ 
\end{conjecture}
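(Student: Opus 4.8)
The plan is to decide, independently for each factor, whether to deploy that factor's cops \emph{statically} on a minimum small common neighbourhood set (the ``$\Upsilon$-approach,'' using $\Upsilon(G)$ cops) or \emph{dynamically} along a hyperopic winning strategy for that factor (the ``$c_H$-approach,'' using $c_H(G)$ cops), always choosing whichever is cheaper and thereby placing exactly $\min\{c_H(G),\Upsilon(G)\}$ cops in $G$ and $\min\{c_H(J),\Upsilon(J)\}$ cops in $J$. When both factors use the $\Upsilon$-approach, the statement is exactly Theorem~\ref{bothupsilon}, so it suffices to treat the cases in which at least one factor uses its $c_H$-approach.

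The first step is a visibility decomposition. Suppose there is at least one cop in each of $G$ and $J$, occupying vertex sets $\mathcal C_G\subseteq V(G)$ and $\mathcal C_J\subseteq V(J)$, and set $A=\bigcap_{c\in\mathcal C_G}N_G(c)$ and $B=\bigcap_{c\in\mathcal C_J}N_J(c)$. Because every vertex of $G$ is adjacent in $G\vee J$ to every vertex of $J$, a short computation with Observation~\ref{upsilon invisible lemma} shows that the common neighbourhood of all cops equals $A\cup B$; in particular a robber in $G$ is invisible precisely when it is adjacent in $G$ to every cop of $\mathcal C_G$, and symmetrically for $J$. Two consequences drive the proof: by Observation~\ref{lemmajoin} the robber is captured the instant it is visible, so it must remain inside $A\cup B$ at all times; and the robber's visibility while in $G$ is governed exactly as in the \emph{hyperopic game on $G$} (the cops in $J$ do not affect it), so each factor's cops genuinely face the hyperopic game on their own factor whenever the robber is on that side.

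With this in hand the combined strategy is the natural one: the $G$-cops run a hyperopic winning strategy on $G$ against the robber's $G$-shadow, while the $J$-cops either hold a minimum small common neighbourhood set $S_J$ (so that $B$ is fixed with $|B|\le\Upsilon(J)$ and an invisible robber in $J$ can be captured in a single sweep) or run a hyperopic winning strategy on $J$. If the robber ever commits to staying on one side, that side's strategy captures it. The delicate point is disambiguation: while the robber is invisible the cops cannot tell whether it lies in $A\subseteq V(G)$ or in $B\subseteq V(J)$. When the $J$-side is static, one can try to exploit a ``phantom capture'': at the moment the hyperopic-$G$ strategy would land a cop on the robber's $G$-shadow, either the robber is really in $G$ and is caught, or it is not, which reveals that the robber lies in $B$ and lets the static $J$-cops finish in one move.

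The hard part will be that the join gives the robber a \emph{wormhole}: from any vertex of $A$ the robber may step to any vertex of $B$ and, a round later, re-enter $G$ at an essentially arbitrary vertex of the (updated) common neighbourhood, teleporting between distant vertices of $G$ and thereby resetting the image-tracking on which the hyperopic-$G$ strategy depends. Since the cops cannot observe these excursions, a robber might hope to shuttle between the factors indefinitely, forestalling capture on both sides. Making the argument rigorous therefore requires showing that such shuttling cannot be sustained --- for instance, a monotone progress or potential argument establishing that every excursion into the opposite factor is ``paid for'' by irreversible progress (a strictly shrinking shadow on one side, or a bounded number of side-switches), or a disambiguation scheme forcing the robber to reveal its side often enough for the cheaper of the two single-factor strategies to complete. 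Verifying that the two single-factor strategies compose without the robber exploiting the wormhole is the crux that the conjecture leaves open.
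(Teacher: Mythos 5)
This statement is one of the paper's open conjectures, not a theorem: the paper offers no proof of it, and explicitly flags the same obstruction you ran into, namely that ``the robber has access to \emph{any} vertex on the other graph, which may interfere with $c_H(G)$ cops and $c_H(J)$ cops carrying out their respective winning strategies.'' Your proposal does not close that gap --- and to your credit, you say so yourself. What you have is sound up to a point: the visibility decomposition is correct (with at least one cop on each side, the common neighbourhood of all cops in $G\vee J$ is exactly $A\cup B$, so invisibility of a robber in $G$ depends only on the $G$-cops, matching Observation~\ref{upsilon invisible lemma} and Observation~\ref{lemmajoin}), and the case where both factors use static $\Upsilon$-sets is precisely Theorem~\ref{bothupsilon}. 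But the moment one side runs a dynamic hyperopic strategy, the argument breaks: a hyperopic winning strategy on $G$ is only guaranteed to win against an adversary whose moves are legal moves \emph{in $G$}, whereas the join-robber can exit to $J$ invisibly and re-enter $G$ at a vertex far from where it left, which is not a legal $G$-move. So the ``$G$-shadow'' you invoke is not a robber that the $G$-strategy is certified to beat, and no progress/potential argument is supplied to rule out indefinite shuttling.

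The ``phantom capture'' idea has a further concrete flaw beyond being unproven: when the $G$-strategy lands a cop on the shadow and the robber is not there, the cops learn the robber is in $B$, but the static $J$-cops must then vacate $S_J$ to sweep $B$. During that sweep the set of invisible vertices changes (it is no longer $A\cup B$), and the robber --- who moves after seeing the cops' moves and may have just stepped back into $G$ --- need not be caught; nothing pins it to $B$ for the round the sweep requires. In short, your proposal is a reasonable reduction of the conjecture to its genuine crux (bounding or preventing side-switches by an invisible robber), but that crux is exactly what remains open, so this is a proof outline with a gap, not a proof.
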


We have seen $c_H(G \vee J) \leq c(G \vee J)+\Upsilon(G \vee J)$ and $c_H(G \vee J) \leq \Upsilon(G) + \Upsilon(J)$.  The proof of this upper bound  provides an interesting challenge as the robber has access to \emph{any} vertex on the other graph, which may interfere with $c_H(G)$ cops and $c_H(J)$ cops carrying out their respective winning strategies. 

\begin{conjecture}
\label{conj: upsilon}
For connected graphs $G$ and $J$, $c_H(G \vee J) \leq \Upsilon(G \vee J)$.
\end{conjecture}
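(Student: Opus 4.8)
The plan is to reduce to the residual case left open by the two preceding theorems on joins and then exploit a structural consequence of one-sidedness. If $G \vee J$ admits a minimum small common neighbourhood set meeting both $V(G)$ and $V(J)$, the earlier theorem for that case already delivers $c_H(G\vee J) \le \Upsilon(G\vee J)$; so I would assume that every minimum small common neighbourhood set is contained entirely in $V(G)$ or entirely in $V(J)$, and fix one such set $S$, say $S \subseteq V(G)$, with $|S| = \Upsilon(G\vee J) =: k$. Writing $T := \bigcap_{v\in S} N_G(v)$, the join structure gives $\bigcap_{v\in S} N_{G\vee J}(v) = T \cup V(J)$ as a disjoint union (each $v \in S$ is adjacent to all of $V(J)$), and since $S$ is a small common neighbourhood set of $G \vee J$ this yields the key inequality $|T| + |V(J)| \le k$. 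In particular $|V(J)| \le k$: one-sidedness forces the opposite factor to be small.

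The strategy skeleton then runs as follows. Place all $k$ cops on $S$. By Observation~\ref{upsilon invisible lemma}, if the robber is ever invisible at a cops' turn it occupies a vertex of $T \cup V(J)$; this set has size at most $k$, and each of its vertices is adjacent to every vertex of $S$ (the vertices of $T$ by definition, those of $V(J)$ through the join). Hence the $k$ cops can move to cover $T\cup V(J)$ entirely and capture the robber. So the robber can survive only by remaining visible at every cops' turn. A visible robber cannot sit on a vertex of $V(J)$ (such a vertex is adjacent to all of $S$ and so leaves the robber invisible), nor on a vertex of $N_G[S]$ (a neighbouring cop would capture it on the next move). Thus a surviving robber is pinned to the undominated region $R := V(G)\setminus N_G[S]$ and, being perpetually visible, hands the cops full information. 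If $R = \emptyset$, that is, if $S$ dominates $G$, the robber has nowhere to hide and the proof is complete.

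The main obstacle is therefore the case $R \ne \emptyset$, and this is where I expect the real work to lie. One natural route is combinatorial: to show that in the one-sided case a minimum small common neighbourhood set can always be chosen to dominate its own side, forcing $R = \emptyset$. I suspect this is false in general and would instead pursue a dynamic route. Since a perpetually visible robber reduces the interaction to the classical game, where $c(G\vee J)\le 2$, the idea is to interleave a two-cop full-information chase with the standing threat of the pounce. The crucial leverage is that a single cop placed in $V(J)$ is adjacent to all of $V(G)$ and so, exactly as in Observation~\ref{lemmajoin}, captures any visible robber in $G$ in one move; the entire difficulty is to reconcile parking one cop in $J$ with keeping enough cops positioned to cover $T\cup V(J)$. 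This is precisely the single extra cop that the $\Upsilon(G\vee J)+1$ bound spends and that the conjecture asks us to save.

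I would attack the dynamic version with a potential argument that tracks the shrinking of the robber's visible safe region $R$ while maintaining, as an invariant, that the current cop configuration still has common neighbourhood of size at most $k$, so that any move into invisibility is immediately punished by a pounce. The delicate point will be the move order at the visible/invisible transition: the chasing cops must temporarily leave $S$, and the robber, moving second, will try to dive into $T\cup V(J)$ at exactly the moment pounce-readiness has been relinquished. Reconciling such an invariant with genuine, terminating progress on $R$ is, I expect, the heart of the matter and the step most likely to require a new idea rather than a routine adaptation of the arguments already developed for joins.
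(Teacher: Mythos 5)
There is a fundamental point to flag first: this statement is a \emph{conjecture} in the paper, explicitly listed under ``Open Questions''; the authors give no proof and state it is unproven. So there is no paper proof to compare against, and your attempt would need to stand as a complete, self-contained argument. It does not, as you yourself concede: everything you establish rigorously (the reduction via the paper's two join theorems to the one-sided case, the identity $\bigcap_{v\in S} N_{G\vee J}(v) = T \cup V(J)$, the inequality $|T| + |V(J)| \le k$, and the pounce that wins whenever all $k$ cops sit on $S$ and the robber goes invisible) only pins a surviving robber to the region $R = V(G)\setminus N_G[S]$. The case $R \neq \emptyset$ — the actual content of the conjecture — is left as a sketch of intent rather than an argument.

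The obstruction you flag is genuine and can be made concrete, which shows why your ``dynamic route'' does not go through as sketched. While all $k$ cops remain parked on $S$, none of them is adjacent to any vertex of $R$, so they can never capture a robber that sits still in $R$; to create a threat, some cop must leave $S$, say moving from $v \in S$ to $x \in V(J)$. At that instant the invisibility region becomes $T' \cup N_J(x)$, where $T' = \bigcap_{u \in S\setminus\{v\}} N_G(u) \supseteq T$. But here minimality of $S$ works \emph{against} you: since $|S\setminus\{v\}| = k-1 < \Upsilon(G\vee J)$, the set $S\setminus\{v\}$ is not a small common neighbourhood set of $G\vee J$, so $|T'| + |V(J)| \ge k$, and $T' \cup N_J(x)$ need not be coverable by your $k$ cops in one move. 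The robber, moving second, simply steps from $R$ into $V(J)$ (always possible through the join) or into $T'$ and vanishes, and your invariant is irrecoverably broken. This is precisely the tension the authors themselves cite as the reason the conjecture is open — without a mixed minimum small common neighbourhood set, Observation~\ref{lemmajoin} cannot be invoked, and no known strategy reconciles pounce-readiness with progress against a visible robber. Your structural observations (in particular $|V(J)| \le \Upsilon(G\vee J)$ in the one-sided case) are correct and potentially useful, but the proof has a missing core, not a missing routine verification.
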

Although this would only improve the bound by one, it is a natural extension from the previous work done with small common neighbourhood sets. Since $\Upsilon(G \vee J)$ does not guarantee that the smallest common neighbourhood set contains vertices in both $G$ and $J$, we cannot utilize Observation \ref{lemmajoin}.  As a result, it is not simple to make the logical jump from Theorem~\ref{bothupsilon} to Conjecture~\ref{conj: upsilon}. Furthermore, it would also serve useful to determine which graph classes have the property that $\Upsilon(G \vee J) = \Upsilon(G) + \Upsilon(J)$. This would further narrow the question of how frequently there is a 
discrepancy between Theorem~\ref{bothupsilon} and Conjecture~\ref{conj: upsilon}. 

Determining a lower bound using a small common neighbourhood set would also be useful, ideally, a general lower bound that would accompany that given by Theorem~\ref{Upsilon all graph}. Another natural extension of this work would be results on other graph products such as the lexicographic and strong products. 

\section{Acknowledgements} 
The authors acknowledge research support provided by the Maple League of Universities (2020).

\end{document}